\theoremstyle{plain}
\newtheorem{teor}{Theorem}[section]
\newtheorem{cor}[teor]{Corollary}
\newtheorem{prop}[teor]{Proposition}
\theoremstyle{definition}
\newtheorem{deft}[teor]{Definition}
\theoremstyle{remark}
\newtheorem{oss}[teor]{Remark}
\DeclareMathOperator\tr{tr}
\DeclareMathOperator\upG{\textup{G}}
\DeclareMathOperator\upI{\textup{I}}
\DeclareMathOperator\upK{\textup{K}}
\DeclareMathOperator\upL{\textup{L}}
\DeclareMathOperator\bbC{\mathbb{C}}
\DeclareMathOperator\bbH{\mathbb{H}}
\DeclareMathOperator\bbN{\mathbb{N}}
\DeclareMathOperator\bbR{\mathbb{R}}
\DeclareMathOperator\calB{\mathcal{B}}
\DeclareMathOperator\calM{\mathcal{M}}
\DeclareMathOperator\calN{\mathcal{N}}
\DeclareMathOperator\calP{\mathcal{P}}
\DeclareMathOperator\calX{\mathcal{X}}
\DeclareMathOperator\po{\textup{PO}}
\DeclareMathOperator\pu{\textup{PU}}
\DeclareMathOperator\psp{\textup{PSp}}
\DeclareMathOperator\jac{\textup{Jac}}
\DeclareMathOperator\vol{\textup{Vol}}
\DeclareMathOperator\barb{\textup{bar}_{\mathcal{B}}}
\DeclareMathOperator\hypkn{\mathbb{H}^n_{\textup{K}}}
\DeclareMathOperator\hypkm{\mathbb{H}^m_{\textup{K}}}
\DeclareMathOperator\nv{\textup{NV}}
\title[Natural maps for Zimmer's cocycles]{Natural maps for measurable cocycles of compact hyperbolic manifolds}
\author[A. Savini]{A. Savini}
\address{Section de Math\'ematiques, University of Geneva, Rue du Conseil-G\'en\'eral 7-9, 1205 Geneva, Switzerland}
\email{Alessio.Savini@unige.ch}
\thanks{}
\keywords{uniform lattice, Zimmer cocycle, boundary map, natural map, Jacobian, mapping degree}
\subjclass[2010]{Primary classification: 22E40; Secondary classification: 57M50, 53C24}
\date{\today.\ \copyright{\ A. Savini 2019}.}
\begin{document}

\begin{abstract}
Let $\upG(n)$ be equal to either $\po(n,1),\pu(n,1)$ or $\psp(n,1)$ and let $\Gamma \leq \upG(n)$ be a uniform lattice. Denote by $\hypkn$ the hyperbolic space associated to $\upG(n)$, where $\upK$ is a division algebra over the reals of dimension $d$. Assume $d(n-1) \geq 2$.

In this paper we generalize natural maps to measurable cocycles. Given a standard Borel probability $\Gamma$-space $(X,\mu_X)$, we assume that a measurable cocycle $\sigma:\Gamma \times X \rightarrow \upG(m)$ admits an essentially unique boundary map $\phi:\partial_\infty \hypkn \times X \rightarrow \partial_\infty \hypkm$ whose slices $\phi_x:\hypkn \rightarrow \hypkm$ are atomless for almost every $x \in X$. Then there exists a $\sigma$-equivariant measurable map $F: \hypkn \times X \rightarrow \hypkm$ whose slices \mbox{$F_x:\hypkn \rightarrow \hypkm$} are differentiable for almost every $x \in X$ and such that $\jac_a F_x \leq 1$ for every $a \in \hypkn$ and almost every $x \in X$. This allows us to define the natural volume $\nv(\sigma)$ of the cocycle $\sigma$. This number satisfies the inequality $\nv(\sigma) \leq \vol(\Gamma \backslash \hypkn)$. Additionally, the equality holds if and only if $\sigma$ is cohomologous to the cocycle induced by the standard lattice embedding $i:\Gamma \rightarrow \upG(n) \leq \upG(m)$, modulo possibly a compact subgroup of $\upG(m)$ when $m>n$. 

Given a continuous map $f:M \rightarrow N$ between compact hyperbolic manifolds, we also obtain an adaptation of the mapping degree theorem to this context.
\end{abstract}

\maketitle

\section{Introduction}

Let $(M,g)$ be a compact Riemannian $n$-manifold which admits a locally symmetric Riemannian metric $g_0$. The minimal entropy conjecture states that the functional given by the volume entropy suitably rescaled by the volume of $g$, that is $(h_{\textup{vol}}(g))^n\vol(M,g)$, is minimized uniquely by the locally symmetric structure(s) on $M$, up to a possible homothety (\cite{Gromov83}). A positive answer for surfaces is given by Besson, Curtois and Gallot \cite{bcg91}, but in this case one has to notice that there exist infinitely many inequivalent hyperbolic structures. On the contrary, when $n \geq 3$, there exist a unique locally symmetric structure on a compact manifold of rank one. For those, a proof of the conjecture is given by Besson, Courtois and Gallot \cite{bcg95,bcg96,bcg98} with the introduction of the so called \emph{natural maps}, whereas in the higher rank case the conjecture is still open. 

Natural maps revealed a very powerful tool and, for this reason, they have been used later in the study of many other different problems. For instance Boland, Connell and Souto \cite{boland05} apply them to the study of volume rigidity of non-uniform real hyperbolic lattices. Another example is given by Francaviglia and Klaff. In \cite{franc06:articolo,franc09} the authors exploit the notion of natural map to study the rigidity of representations of real hyperbolic lattices. Given a torsion-free lattice $\Gamma \leq \po^\circ(n,1)$ and a representation $\rho:\Gamma \rightarrow \po(m,1)$ with $m \geq n \geq 3$, they show the existence of a smooth $\rho$-equivariant map $F: \bbH^n_{\bbR} \rightarrow \bbH^m_{\bbR}$ which satisfies $\jac _a F \leq 1$ for every $a \in \bbH^n_{\bbR}$. Additionally, when $\Gamma$ is non-uniform, they introduce a family of differentiable maps $F^\varepsilon:\bbH^n_{\bbR} \rightarrow \bbH^m_{\bbR}$ depending on $\varepsilon >0$, which are still $\rho$-equivariant, properly ending and satisfy $\jac_a F^\varepsilon \leq (1+\varepsilon)$ for every $a \in \bbH^n_{\bbR}$ (the properly ending property can be interpreted as a compatibility condition of the map on the peripheral subgroups of $\Gamma$). The constructions described above allow to introduce the notion of volume $\vol(\rho)$ of the representation $\rho$ by considering the infimum over all the possible volumes $\vol(D)$, where $D$ is a smooth $\rho$-equivariant map (which properly ends in the \emph{non-unifom} case). 

Volume of representations remains unchanged under the conjugation by an element $g \in \po(m,1)$ and it satisfies a Milnor-Wood type inequality. Indeed we have that $\vol(\rho) \leq \vol(\Gamma \backslash \bbH^n_{\bbR})$ and the equality is attained if and only if the representation is conjugated by an element of $\po(m,1)$ to the standard lattice embedding $i:\Gamma \rightarrow \po(n,1) \leq \po(m,1)$, modulo possibly a compact subgroup when $m > n$. Here $\po(n,1)$ is realized as a subgroup of $\po(m,1)$ via the upper-left corner embedding. 

Notice that when $n=m=3$ the volume of a representation coincides with the definition given independently by Dunfield \cite{dunfield:articolo} and by Francaviglia \cite{franc04:articolo} in terms of pseudo-developing maps (both definitions generalize the notion of volume of a hyperbolic structure reported for instance in \cite{neumann:zagier}). It is worth mentioning that similar rigidity results have been obtained by Bucher, Burger and Iozzi \cite{bucher2:articolo} in the case $n=m$. However their approach to the problem is completely different and their definition of volume of representations relies on the study of the bounded cohomology groups of $\po(n,1)$. 

In the context of rank-one torsion-free lattices, similar questions have been studied also for complex and quaternionic lattices. Given a non-uniform torsion-free lattice $\Gamma \leq \pu(n,1)$ and a representation $\rho:\Gamma \rightarrow \pu(m,1)$ with $m \geq n \geq 2$, Koziarz and Maubon \cite{koziarz:maubon} prove a rigidity result analogous to the one described above but using the theory of harmonic maps. In \cite{BIcartan} Burger and Iozzi obtain the same statement for both uniform and non-uniform lattice using jointly bounded cohomology and $\upL^2$-cohomology. As far as it concerns the study of quaternionic lattices, it is worth mentioning the superrigidity result that Corlette obtains in \cite{corlette92}. 

Recently the author has shown in \cite{savini:articolo,savini2:articolo} a stronger rigidity phenomenon for the volume function. Indeed volume of representations of any rank-one torsion-free \emph{non-uniform} lattice is rigid at the ideal points of the character variety. For instance, if $\Gamma$ is a torsion-free \emph{non-uniform} real hyperbolic lattice, the character variety $X(\Gamma,\po(m,1))$ is an algebraic set of positive dimension and a divergent sequence of representations cannot eventually maximize the volume. The same result can be suitably adapted to the context of complex and quaternionic lattices. To sum up one could say that for rank-one torsion-free lattices the volume of representations is asymptotically rigid.

As already said, the minimal entropy conjecture is still open in the higher rank case. However it is worth mentioning some efforts which move towards the direction of a proof. In \cite{connellfarb1} Connell and Farb succeed in extending the construction of natural maps to lattices in products of rank-one Lie groups of non-compact type. The key point is that they prove an estimate on the Jacobian of the natural map which is still sharp. Similarly they obtain a uniform, but not sharp, Jacobian estimate for more general higher rank symmetric spaces (\cite{connellfarb2}).

Other interesting applications of natural maps have been found for foliations of Riemannian manifolds with locally symmetric negatively curved leaves (\cite{boland:connell}) and for Finsler/Benoist manifolds (\cite{boland:newberger,bray,savini_conv_proj}). 

In this paper we want to extend the notion of natural map to the setting of Zimmer's cocycles theory in order to study rigidity phenomena. Recently this kind of study has been developed by the author and Moraschini using the theory of bounded cohomology (see for instance \cite{savini3:articolo,savini:surface,moraschini:savini,moraschini:savini:2}). Here we want to give a differentiable approach to this subject. More precisely, denote by $\upG(n)$ either $\po(n,1),\pu(n,1)$ or $\psp(n,1)$ and let $\Gamma \leq \upG(n)$ be a torsion-free \emph{uniform} lattice. Since it is well-known that the Riemannian symmetric space associated to $\upG(n)$ is a hyperbolic space on a suitable division algebra $\upK$, we denote it by $\hypkn$. If we denote by $d=\textup{dim}_{\bbR} \upK$ the real dimension of the division algebra $\upK$, we will need to assume $d(n-1) \geq 2$. Fix now a standard Borel probability $\Gamma$-space $(X,\mu_X)$ without atoms. Suppose $m \geq n$ and consider a Zimmer's cocycle $\sigma:\Gamma \times X \rightarrow \upG(m)$ with essentially unique $\sigma$-equivariant measurable map $\phi:\partial_\infty \hypkn \times X \rightarrow \partial_\infty \hypkm$. The boundary map $\phi$ allows us to define for almost every $x \in X$ the \emph{slice} $\phi_x:\partial_\infty \hypkn \rightarrow \partial_\infty \hypkm$ given by $\phi_x(\xi):=\phi(\xi,x)$. Notice that for almost every $x \in X$ the slice $\phi_x$ is measurable since $X$ is standard Borel (\cite[Lemma 2.6]{fisher:morris:whyte}). Supposing that for almost every slice $\phi_x$ the push-forward of the Patterson-Sullivan measure is atom-free (hence the slice is \emph{atomless}), we can apply the barycenter construction to get the desired natural map. In this way we obtain the following:

\begin{teor}\label{teor:natural:map}
Let $\upG(n)$ be either $\po(n,1),\pu(n,1)$ or $\psp(n,1)$ and denote by $\hypkn$ the associated hyperbolic space over the division algebra $\upK$ of dimension $d=\dim_{\bbR} \upK$. Let $\Gamma \leq \upG(n)$ be a torsion-free uniform lattice and fix $(X,\mu_X)$ a standard Borel probability $\Gamma$-space. Suppose $d(n-1) \geq 2$ and take $m \geq n$. Given a measurable cocycle $\sigma:\Gamma \times X \rightarrow \upG(m)$, assume there exists an essentially unique boundary map $\phi:\partial_\infty \hypkn \times X \rightarrow \partial_\infty \hypkm$ with atomless slices. Then there exists a measurable map $F: \hypkn \times X \rightarrow \hypkm$ which is $\sigma$-equivariant. Additionally for almost every $x \in X$ the slice $F_x:\hypkn \rightarrow \hypkm$ is smooth and we have
$$
\jac _a F_x \leq 1 \ , 
$$
for every $a \in \hypkn$. The equality is attained if and only if the map $D_aF_x: T_a \hypkn \rightarrow T_{F_x(a)} \hypkm$ is an isometric embedding. 
\end{teor}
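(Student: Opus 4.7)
The plan is to adapt the Besson--Courtois--Gallot (BCG) barycenter construction slice-by-slice in the variable $x \in X$ and then to package the slices into a jointly measurable map $F:\hypkn \times X \to \hypkm$. The essential injectivity of $\phi_x$ is exactly what guarantees atom-freeness of the boundary push-forward, which is the prerequisite for running the barycenter machinery.

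For $x$ in the conull set $X_0 \subseteq X$ on which $\phi_x$ is essentially injective, and for $a \in \hypkn$, let $\nu_a$ denote the Patterson--Sullivan probability measure on $\partial_\infty \hypkn$ pointed at $a$, and set $\mu_{x,a} := (\phi_x)_*\nu_a$, which is atom-free on $\partial_\infty \hypkm$. Define $F_x(a)$ as the unique minimizer on $\hypkm$ of the strictly convex Busemann functional
\[
\calB_{x,a}(y) \;=\; \int_{\partial_\infty \hypkn} B_{\phi_x(\xi)}(y, o)\, d\nu_a(\xi),
\]
where $B_\xi(\cdot,o)$ is the Busemann function at $\xi$ normalized at a fixed basepoint $o \in \hypkm$; the strict convexity, which requires the rank assumption $d(n-1)\ge 2$, is the Douady--Earle/BCG criterion applied to an atom-free boundary measure on $\hypkm$. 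Joint measurability of $(a,x) \mapsto F_x(a)$ follows from measurability of $\phi$ and of $a \mapsto \nu_a$ combined with uniqueness of the minimizer. To verify $\sigma$-equivariance I would combine the transformation rule of the Patterson--Sullivan family under $\upG(n)$ with the cocycle identity $\phi_{\gamma x}(\gamma \xi) = \sigma(\gamma,x)\phi_x(\xi)$: the Radon--Nikodym factors coming from the change of basepoint on $\hypkn$ match the analogous factors built into the Busemann cocycle on $\hypkm$, and isometries of $\hypkm$ intertwine barycenters, yielding $F_{\gamma x}(\gamma a)=\sigma(\gamma,x)F_x(a)$ on a conull set.

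For each fixed $x \in X_0$ the classical BCG/Francaviglia--Klaff argument then applies verbatim to the atom-free boundary measure $\mu_{x,a}$. Differentiating the Euler--Lagrange identity $\int dB_{\phi_x(\xi)}|_{F_x(a)}\, d\nu_a(\xi)=0$ in $a$ shows that $F_x$ is smooth and expresses $D_aF_x$ implicitly through two symmetric positive semidefinite endomorphisms $K_{x,a}$ and $H_{x,a}$ on $T_{F_x(a)}\hypkm$, built respectively from the Hessian of the Busemann function and from the self-tensor-square of $dB_\xi$ integrated against $\mu_{x,a}$. The sharp algebraic inequality relating their determinants yields $\jac_a F_x \le 1$, while equality forces $K_{x,a}$ and $H_{x,a}$ to match the model Patterson--Sullivan forms on $\hypkn$, which in turn is equivalent to $D_aF_x$ being an isometric embedding. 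The main obstacle is not the pointwise BCG estimate, by now standard, but the bookkeeping needed to run the construction on a single conull set $X_0$ independent of $a \in \hypkn$, so that $F$ is globally measurable and the slice properties hold simultaneously for every $a$; this is handled by exploiting continuity in $a$ of the Patterson--Sullivan family together with separability of $\hypkn$ to promote the almost-sure statements into a single null exceptional set in $X$.
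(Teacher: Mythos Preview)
Your proposal is correct and follows essentially the same route as the paper: define $F(a,x)=\barb((\phi_x)_\ast\nu_a)$, check $\sigma$-equivariance via the equivariance of the Patterson--Sullivan family and of the barycenter, then run the BCG implicit-function and determinant arguments slice by slice. Two small corrections are worth making. First, the equivariance step is simpler than you indicate: no Radon--Nikodym matching is needed, only the chain $(\phi_{\gamma x})_\ast(\nu_{\gamma a})=(\phi_{\gamma x})_\ast(\gamma_\ast\nu_a)=\sigma(\gamma,x)_\ast(\phi_x)_\ast(\nu_a)$ together with $\upG(m)$-equivariance of $\barb$. Second, the hypothesis $d(n-1)\ge 2$ is not what gives strict convexity of the Busemann functional (that follows from atom-freeness alone); it enters only in the sharp determinant inequality that yields $\jac_a F_x\le 1$.
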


Since the map $F:\hypkn \times X \rightarrow \hypkm$ is a clear generalization of the natural map defined by Besson, Courtois and Gallot to the context of Zimmer's cocycles, we are going to say that $F$ is \emph{the natural map associated to the cocycle $\sigma$}. Even if the condition on the slices of the boundary map may seem quite restrictive, natural maps exist for measurable cocycles coming from couplings, as shown by Bader, Furman and Sauer \cite[Lemma 3.6]{sauer:articolo} and by the author \cite[Lemma 3.1]{savini4}. 

We are also going to define the notion of volume associated to $\sigma$. Our definition will differ from the one given by Francaviglia and Klaff for representations. Indeed here we are going to concentrate our attention only to the natural map associated to a fixed cocycle, without taking any infimum over all the possible volumes of equivariant maps.

Given any $\sigma$-equivariant measurable map $\Phi: \hypkn \times X \rightarrow \hypkm$ with differentiable slice $\Phi_x: \hypkn \rightarrow \hypkm$ for almost every $x \in X$, we can consider the volume form associated to the pullback metric $(\omega_x)_a(u_1,\ldots,u_p):=\sqrt{ \det g_m(D_a\Phi_x(u_i),D_a\Phi_x(u_j))}$, where $u_1,\ldots,u_p \in T_a\hypkn$ and $g_m$ is the standard Riemannian metric on $\hypkm$. When $\Phi$ satisfies the \emph{essential boundedness} condition we obtain a measurable family of differential forms $\{ \omega_x \}_{x \in X}$ on $\hypkn$ (see Section \ref{sec:vol:coc}). Hence by considering its integral over $X$, we obtain a differential form on $\hypkn$ which is $\Gamma$-invariant by the equivariance of the map $\Phi$. Thus we have a differential form on $\Gamma \backslash \hypkn$ and we can take its integral. This number will be the \emph{volume associated to the measurable map $\Phi$}. If we specialize to the case of the natural map, which is essentially bounded, we call it the \emph{natural volume $\nv(\sigma)$ of $\sigma$}. 

Clearly the natural volume of a cocycle will be invariant by the conjugation action of $\upG(m)$ on the space of cocycles. Moreover this volume satisfies a Milnor-Wood inequality type similar to the one obtained by Bucher, Burger and Iozzi \cite{bucher2:articolo} for representations, by Bader, Furman and Sauer \cite{sauer:articolo} for self-couplings and by the author and Moraschini \cite{moraschini:savini,moraschini:savini:2} for cocycles. Notice that the result obtained in \cite{moraschini:savini} are valid for $n=m$ whereas here we can consider also the case $m >n$. 

The Milnor-Wood inequality obtained here will be crucial to prove the following rigidity result:

\begin{teor}\label{teor:rigidity:cocycle}
Let $\upG(n)$ be either $\po(n,1),\pu(n,1)$ or $\psp(n,1)$ and denote by $\hypkn$ the associated hyperbolic space over the division algebra $\upK$ of dimension $d=\dim_{\bbR} \upK$. Let $\Gamma \leq \upG(n)$ be a torsion-free uniform lattice and fix $(X,\mu_X)$ a standard Borel probability $\Gamma$-space. Suppose $d(n-1) \geq 2$ and take $m \geq n$. Given a measurable cocycle $\sigma:\Gamma \times X \rightarrow \upG(m)$, assume there exists an essentially unique boundary map $\phi:\partial_\infty \hypkn \times X \rightarrow \partial_\infty \hypkm$ with atomless slices. Then 
$$
\nv(\sigma) \leq \vol(\Gamma \backslash \hypkn) \ ,  
$$
and the equality holds if and only if $\sigma$ is cohomologous to the cocycle induced by the standard lattice embedding $i:\Gamma \rightarrow \upG(n) \leq \upG(m)$, modulo possibly a compact subgroup of $\upG(m)$ when $m>n$. Here $\upG(n)$ is realized into $\upG(m)$ via the upper-left corner embedding. 
\end{teor}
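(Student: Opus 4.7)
The plan is to exploit the natural map produced by Theorem \ref{teor:natural:map} together with a Fubini-style integration, following the strategy of Besson--Courtois--Gallot and Francaviglia--Klaff for representations, adapted to the family of slices parametrised by $X$. Let $F:\hypkn\times X \to \hypkm$ be the natural map of Theorem \ref{teor:natural:map}. The first step is to rewrite
\begin{equation*}
\nv(\sigma) \;=\; \int_{\Gamma \backslash \hypkn} \biggl( \int_X \jac_a F_x \, d\mu_X(x) \biggr) d\vol_{\hypkn}(a),
\end{equation*}
using that $\sqrt{\det F_x^\ast g_m}(a) = \jac_a F_x \cdot d\vol_{\hypkn}(a)$ and that the $\sigma$-equivariance of $F$ together with the $\Gamma$-invariance of $\mu_X$ makes the inner $L^\infty(X)$-valued form $\Gamma$-invariant on $\hypkn$. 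Since $\jac_a F_x \leq 1$ pointwise and $\mu_X$ is a probability measure, the inner integral is pointwise bounded by $1$, which immediately yields $\nv(\sigma) \leq \vol(\Gamma \backslash \hypkn)$.

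For the equality case, assume $\nv(\sigma) = \vol(\Gamma \backslash \hypkn)$. Then Fubini forces $\jac_a F_x = 1$ for $\mu_X$-a.e.\ $x$ and a.e.\ $a$, and smoothness of the slice $F_x$ upgrades this to identical equality in $a$. The equality clause of Theorem \ref{teor:natural:map} then says $D_a F_x$ is an isometric embedding at every point, so each such $F_x$ is an isometric immersion $\hypkn \to \hypkm$. For rank-one symmetric spaces over the division algebra $\upK$, the classical rigidity of isometric immersions (via the Gauss equation and the constraint on holomorphic/quaternionic sectional curvatures, which uses $d(n-1) \geq 2$) forces $F_x$ to be a totally geodesic isometric embedding whose image is a standard $\upK$-hyperbolic copy of $\hypkn$ inside $\hypkm$.

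To conclude that $\sigma$ is cohomologous to the standard embedding, let $\upL \leq \upG(m)$ denote the pointwise stabiliser of $i(\hypkn)$, a compact subgroup that centralises $i(\upG(n))$. Measurable selection applied to the $\upG(m)$-homogeneous space $\upG(m)/\upL$ of totally geodesic isometric embeddings yields $f: X \to \upG(m)$ with $F_x = f(x) \cdot i$ for almost every $x$. Substituting in the equivariance relation $F_{\gamma x}(\gamma a) = \sigma(\gamma,x) F_x(a)$ shows that $\sigma'(\gamma,x) := f(\gamma x)^{-1}\sigma(\gamma,x) f(x)$ agrees with $i(\gamma)$ on $i(\hypkn)$, hence factors as $i(\gamma) k(\gamma,x)$ with $k(\gamma,x) \in \upL$; the centraliser property makes $k$ a genuine $\upL$-valued measurable cocycle. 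The main obstacle is trivialising this residual compact cocycle via a further measurable gauge change $f \mapsto f \cdot k'$, combining measurable selection, the centraliser structure of $\upL$ in $\upG(m)$, and the essential uniqueness of the boundary map $\phi$ to kill the $\upL$-ambiguity and conclude that $\sigma$ is cohomologous to the cocycle $(\gamma,x) \mapsto i(\gamma)$. The Milnor--Wood half is essentially immediate from the Jacobian bound; the technical heart of the proof lies entirely in this last measurable-rigidity step.
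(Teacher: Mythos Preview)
Your overall strategy matches the paper's: the Milnor--Wood inequality via the pointwise bound $\jac_a F_x \le 1$ integrated over $X$ and a fundamental domain, and then in the equality case deducing $\jac_a F_x \equiv 1$ slice-by-slice so that each $F_x$ is a totally geodesic copy of $\jmath_{n,m}$ up to an element of $\upG(m)$. Two points deserve comment.

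First, the paper does \emph{not} engage with the residual compact stabiliser cocycle you isolate. Having obtained a measurable $f:X\to\upG(m)$ with $F_x = f(x)\,\jmath_{n,m}$ (measurability via \cite[Lemma~2.6]{fisher:morris:whyte} rather than an abstract measurable selection), the paper simply invokes \cite[Proposition~3.2]{sauer:articolo} to conclude that $\sigma$ is cohomologous to $i_{n,m}|_\Gamma$. Your analysis of the leftover $\upL$-valued cocycle $k$ is more explicit, and your instinct that this is where the real work sits is correct; but the resolution you sketch (``essential uniqueness of $\phi$ \ldots\ to kill the $\upL$-ambiguity'') is too vague to stand on its own. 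Note in particular that a measurable cocycle into a compact group is \emph{not} automatically a coboundary, so something specific to the situation must be used. Either carry out the argument in detail or, as the paper does, defer to the Bader--Furman--Sauer reference.

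Second, you omit the converse direction of the ``if and only if''. The paper checks explicitly that the cocycle $\sigma_{i_{n,m}}$ associated to the standard embedding has natural map $F(a,x)=\jmath_{n,m}(a)$, whose slices have Jacobian identically $1$, so that $\nv(\sigma_{i_{n,m}})=\vol(\Gamma\backslash\hypkn)$; together with Proposition~\ref{prop:volume:cohomology} this gives maximality for the whole cohomology class. This step is short but should be included.
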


First notice that for both Theorem \ref{teor:natural:map} and Theorem \ref{teor:rigidity:cocycle} the hypothesis of uniformity of the lattice $\Gamma$ is not crucial. Hence the same results can be generalized also to non-uniform lattices. A suitable variation of Theorem \ref{teor:rigidity:cocycle} has been exploited by the author \cite{savini4} to show that the group $\pu(n,1)$ is $1$-taut in the sense of Bader, Furman and Sauer \cite{sauer:articolo}. That problem was an open conjecture whose proof allows us to classify finitely generated groups that are integrable measure equivalent to a complex hyperbolic lattice. 

The proof of Theorem \ref{teor:rigidity:cocycle} relies on the sharpness on the estimate of the Jacobian of the slices of $F$. More precisely one can see that if the volume of $\sigma$ is maximal then for almost every $x \in X$ the Jacobian must satisfy $\jac_a F_x=1$ for almost every $a \in \hypkn$. In particular the slice $F_x$ coincides essentially with a totally geodesic embedding and hence it is essentially equal to an element $F_x=f(x) \in \upG(m)$. In this way we obtain a map $f: X \rightarrow \upG(m)$ whose measurability is guaranteed by \cite[Lemma 2.6]{fisher:morris:whyte}. We conclude the proof applying the strategy exposed in \cite[Proposition 3.2]{sauer:articolo} and adding a measurable function into a compact subgroup of $\upG(m)$, when $m >n$.  

The notion of volume is useful also to study the mapping degree of continuous maps between closed hyperbolic manifolds of the same dimension. The mapping degree theorem, first stated by Kneser \cite{kneser} for surfaces and then extended by Thurston \cite{Thurston} in the higher dimensional case, states that given a continuous map $f:M \rightarrow N$ between closed real hyperbolic manifolds of the same dimension, it must hold that
$$
|\deg(f)| \leq \frac{\vol(M)}{\vol(N)} \ .
$$
Additionally, the strict version \cite[Theorem 6.4]{Thurston} of the theorem characterizes local isometries as those maps satisfying the equality. 

Several proofs of the mapping degree theorem have been given so far. For instance Thurston \cite{Thurston} and Gromov \cite{Grom82} used $\ell^1$-homology and the notion of simplicial volume. Besson, Courtois and Gallot \cite{bcg95,bcg96,bcg98} obtained a proof based on their concept of natural map (generalized later by Connell and Farb \cite{connellfarb1,connellfarb2} to the higher rank case). For real hyperbolic manifolds, it is worth mentioning the approach of Bucher, Burger and Iozzi \cite{bucher2:articolo} based on the study of bounded cohomology groups of $\po(n,1)$. Similarly the author and Moraschini \cite{moraschini:savini} obtained an analogous proof by studying the notion of maximal Zimmer's cocycles. 

The interest in the relation between the mapping degree of continuous maps and the volume of manifolds led to a rich and fruitful literature \cite{LK:degree,Neof1,Neof2,FM:ideal}. Derbez, Liu, Sun and Wang \cite[Proposition 3.1]{derbez:liu:sun:wang} were able to express the volume of the pullback of a representation $\rho$ along a continuous map $f$ as the product of the mapping degree of $f$ with the volume of $\rho$. The same has been done in \cite{moraschini:savini} in the case of maximal cocycles. Here we want to generalize this result to the context of measurable maps with smooth slices which are equivariant with respect a fixed measurable cocycle. Given continuous map $f:M \rightarrow N$ and a measurable equivariant map $\Phi:\hypkn \times X \rightarrow \hypkm$, one can suitably define the notion of \emph{pullback map $f^\ast \Phi$ along the continuous map $f$} (see Section \ref{sec:vol:degree}). Then we have the following version of the mapping degree theorem.

\begin{prop}\label{prop:map:deg}
Let $\Gamma,\Lambda \leq \upG(n)$ be two torsion-free uniform lattices. Set $M=\Gamma \backslash \hypkn$, $N=\Lambda \backslash \hypkn$ and let $f:M \rightarrow N$ be a continuous map with non-vanishing degree. Fix a standard Borel probability $\Lambda$-space $(X,\mu_X)$ and consider a measurable cocycle $\sigma:\Lambda \times X \rightarrow \upG(m)$. Given any measurable $\sigma$-equivariant map $\Phi:\hypkn \times X \rightarrow \hypkm$ with essentially bounded smooth slices we have that 
$$
|\deg(f)| \leq \frac{\vol(f^\ast \Phi)}{\vol(\Phi)} \ .
$$
Additionally if $f$ is homotopic to a local isometry then the equality is attained. 
\end{prop}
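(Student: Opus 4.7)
The plan is to reduce the claim to the classical mapping degree formula applied to the top-degree volume form descended from the equivariant map $\Phi$. First I would use homotopy invariance to assume $f$ is smooth: $|\deg(f)|$ depends only on the homotopy class of $f$, and I expect that the construction of $f^\ast \Phi$ in Section \ref{sec:vol:degree} satisfies the analogous invariance, namely that homotopic continuous maps produce pullback cocycles in the same $\upG(m)$-cohomology class, whence their pullback natural volumes coincide by the $\upG(m)$-invariance of $\nv$ established earlier. Having reduced to smooth $f$, I pick any smooth lift $\tilde f:\hypkn \to \hypkn$ equivariant for the induced homomorphism $\Gamma \to \Lambda$, which realises $f^\ast \Phi$ slicewise by $(f^\ast \Phi)_x = \Phi_x \circ \tilde f$.

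For almost every $x \in X$ the slice $(f^\ast \Phi)_x$ is then smooth, and its pullback bilinear form factorises as $(f^\ast \Phi)_x^\ast g_m = \tilde f^\ast(\Phi_x^\ast g_m)$. Taking the square root of the determinant with respect to the hyperbolic metric $g_n$ yields, pointwise,
$$\omega_x^{f^\ast \Phi}(a) = |\jac_a \tilde f|\, \omega_x^\Phi(\tilde f(a)),$$
and averaging over $X$ (allowed by the essential boundedness of the slices) before descending to the quotients gives the identity $\omega_M^{f^\ast \Phi} = |f^\ast \omega_N^\Phi|$ between positive top-degree densities on $M$, where $\omega_N^\Phi$ is viewed as a signed top form after fixing the orientation of $N$ making it positive. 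The standard degree formula for smooth maps between closed oriented manifolds of the same dimension then provides
$$\int_M f^\ast \omega_N^\Phi = \deg(f) \int_N \omega_N^\Phi = \deg(f)\cdot \vol(\Phi),$$
and combining with the triangle inequality $\int_M |f^\ast \omega_N^\Phi| \geq \bigl|\int_M f^\ast \omega_N^\Phi\bigr|$ yields the desired estimate after division by $\vol(\Phi)$; if $\vol(\Phi) = 0$ the same chain forces $\vol(f^\ast \Phi) = 0$, and the inequality is trivial.

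For the equality statement, homotopy invariance again reduces matters to the case where $f$ is itself a local isometry $\ell$. Any lift $\tilde\ell$ is then a local isometry of the complete simply connected manifold $\hypkn$, hence a global isometry, so $|\jac \tilde\ell| \equiv 1$ and the signed form $\tilde\ell^\ast \omega^\Phi$ has constant sign on $\hypkn$. The triangle inequality used above is therefore saturated, giving $\vol(\ell^\ast \Phi) = |\deg(\ell)|\cdot \vol(\Phi)$. The step I anticipate will require the most care is the homotopy invariance of $\vol(f^\ast \Phi)$ for merely continuous $f$, since the smooth slices needed to literally make sense of $\omega^{f^\ast \Phi}$ only become available after passing to a smooth representative of the homotopy class; this invariance must be cleanly extracted from the definition of the pullback map given in Section \ref{sec:vol:degree}.
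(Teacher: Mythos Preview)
Your core computation---pull back the density $\omega_N^\Phi$ along the map to $N$ and combine the degree formula $\int_M F^\ast\omega_N^\Phi = \deg(F)\,\vol(\Phi)$ with the pointwise inequality $\int_M |F^\ast\omega_N^\Phi| \geq \left|\int_M F^\ast\omega_N^\Phi\right|$---is sound, and is in fact a repackaging of what the paper does: the paper applies the co-area formula to rewrite $\vol(f^\ast\Phi)$ as $\int_N \calN(b)\,\omega_N^\Phi$ and then invokes the pointwise bound $\calN(b)\geq\deg(F)$. These two routes are equivalent. However, you have misread the definition of $f^\ast\Phi$. In Section~\ref{sec:vol:degree} the pullback is defined by precomposing each slice with the Besson--Courtois--Gallot natural map $\widetilde{F}:\hypkn\to\hypkn$ attached to the homomorphism $\pi_1(f)$, \emph{not} with a smooth lift of $f$ itself. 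This $\widetilde{F}$ is already smooth and descends to $F:M\to N$ with $\deg(F)=\deg(f)$, so no smoothing step is required; and since $\widetilde{F}$ depends only on $\pi_1(f)$, the quantity $\vol(f^\ast\Phi)$ is homotopy-invariant by construction. Your anticipated difficulty therefore dissolves once the actual definition is used. Note also that your appeal to the $\upG(m)$-invariance of $\nv$ is off target here: the proposition concerns $\vol(\Phi)$ for an arbitrary $\Phi\in\mathscr{D}(\sigma)$, not the natural volume of a cocycle, and cohomology of $f^\ast\sigma$ says nothing a priori about $\vol(f^\ast\Phi)$ for such $\Phi$.

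For the equality case your idea again works once corrected. If $f$ is homotopic to a local isometry $\ell$, then $\pi_1(f)$ is realized by the global isometry $\tilde\ell$ of $\hypkn$, and by essential uniqueness of the $\pi_1(f)$-equivariant boundary map the BCG natural map $\widetilde{F}$ \emph{is} $\tilde\ell$; hence $\jac\widetilde{F}\equiv 1$ and your triangle inequality (equivalently, the paper's $\calN(b)=\deg(F)$) saturates. The paper reaches the same endpoint by a slightly different path: it first quotes Thurston's strict mapping-degree theorem to obtain $\vol(M)=\deg(f)\vol(N)$, and then uses the standard BCG Jacobian bound to force $\jac_a\widetilde{F}=1$ almost everywhere, so that $\widetilde{F}$ is an isometry.
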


The proof of the latter statement will rely essentially on both the co-area formula and on Thurston's strict version of mapping degree theorem. If in the previous proposition we consider the particular case when $\Phi$ is the natural map associated to $\sigma$, then we get an inequality which relates the volume of $f^\ast\Phi$ and the natural volume of $\sigma$, that is:
$$
|\deg(f)| \leq \frac{ \vol(f^\ast \Phi)}{\nv(\sigma)} \ .
$$  

It is worth noticing that a priori we do not know if $f^\ast \Phi$ is the natural map associated to $f^\ast \sigma$, so we cannot push any further our reasoning. Nevertheless, the above estimate allows us to characterize maps homotopic to local isometries in terms of naturally maximal cocycles, that is cocycles with maximal natural volume (compare with \cite[Proposition 1.3]{moraschini:savini}).

\subsection*{Acknowlegdements} I am grateful to the anonymous referee for her/his suggestions that allowed me to improve the quality of the paper. 

\subsection*{Plan of the paper} 

Section \ref{sec:def:res} is devoted to recall the main definitions and results that we will need in the paper. More precisely, in Section \ref{subsec:zimmer:cocycle} we briefly recall Zimmer's cocycles theory. We move then to the definition of barycenter for atom-free probability measures on the boundary at infinity $\partial_\infty \hypkn$, described in Section \ref{subsec:bary:meas}. Then in Section \ref{subsec:patterson:sullivan} we expose the notion of the Patterson-Sullivan density associated to a lattice and the construction of the Besson-Courtois-Gallot natural map. The crucial definition of natural map associated to a Zimmer's cocycle appears in Section \ref{sec:natural:map}, where we also discuss all of its properties. We show that it can be suitably interpreted as a generalization of the natural map for representations (Proposition \ref{prop:natural:rep}). Additionally we show how natural maps vary along the $\upG(m)$-cohomology class (Proposition \ref{prop:natural:cohomology}).  

We move to Section \ref{sec:vol:coc}. Here we introduce the notion of volume of a measurable equivariant map with essentially bounded differentiable slices and subsequently the notion of natural volume of measurable cocycles (see Definitions \ref{def:volume:map} and \ref{def:volume:coc}). Then the main rigidity result is proved.

We conclude with Section \ref{sec:vol:degree} where we prove our version of mapping degree theorem and some comments about natural volume and naturally maximal cocycles follow.

\section{Preliminary definitions and results} \label{sec:def:res}

In this section we are going to recall briefly all the notions we will need in the paper in order to define the natural map associated to a Zimmer's cocycle. For this reason we first discuss the notion of measurable cocycle and we will see how representation theory fits into this wider context. Then we focus our attention on some elements of boundary theory, in particular on the notion of generalized boundary map. We conclude this digression by talking about the Patterson-Sullivan measures and the barycenter construction. We will need both to construct our natural map. Indeed we are going to apply the barycenter to the push-foward of the Patterson-Sullivan measures with respect to the slices of the boundary map associated to a measurable cocycle. 

\subsection{Zimmer's cocycle theory}\label{subsec:zimmer:cocycle}

For all the material in the following section we mainly refer to the work of both Furstenberg \cite{furst:articolo73,furst:articolo} and Zimmer \cite{zimmer:preprint,zimmer:libro}. 

Let $G,H$ be two locally compact second countable groups endowed both with their natural Haar measure. Consider a standard Borel measure space $(X,\mu)$ on which $G$ acts via measure preserving transformations. We are going to call the space $(X,\mu)$ satisfying the hypothesis above a \emph{standard Borel measure $G$-space}. If additionally $(X,\mu)$ is a probability space without atoms we are going to say that $(X,\mu)$ is a \emph{standard Borel probability $G$-space}. 

Given another measure space $(Y,\nu)$, we are going to denote by $\textup{Meas}(X,Y)$ the \emph{space of measurable functions from $X$ to $Y$}, endowed with the topology of the convergence in measure. 

\begin{deft}\label{def:zimmer:cocycle}
Let $\sigma:G \times X \rightarrow H$ be a measurable function. We call $\sigma$ a \emph{measurable cocycle} (or \emph{Zimmer's cocycle} or simply \emph{cocycle}) if the associated map 
$$
\sigma: G \rightarrow \textup{Meas}(X,H), \hspace{5pt} g \mapsto \sigma(g,\cdot) \ , 
$$
is continuous and it holds 
\begin{equation}\label{equation:zimmer:cocycle}
\sigma(g_1g_2,x)=\sigma(g_1,g_2.x)\sigma(g_2,x) \ , 
\end{equation}
for every $g_1,g_2 \in G$ and almost every $x \in X$. 
\end{deft}

In the previous definition we preferred to stress the action of $G$ on $X$ using the dot, but from now on we will omit this symbol. 

At a first sight the notion of measurable cocycle might seem quite mysterious to the reader who is not familiar with this theory. One could interpret Equation (\ref{equation:zimmer:cocycle}) either as a suitable generalization of the chain rule for derivatives or as the classic Eilenberg-MacLane condition for Borel $1$-cocycle (see \cite{feldman:moore,zimmer:preprint}). The latter interpretation comes from viewing the cocycle as an element $\sigma \in \textup{Meas}(G,\textup{Meas}(X,H))$. Following the line of this interpretation it is natural to define also the notion of cohomologous cocycles.

\begin{deft}\label{def:coom:cocycle}
Let $\sigma_1,\sigma_2:G \times X \rightarrow H$ be two measurable cocycles and let $f:X \rightarrow H$ be a measurable function. Then the cocycle defined by
\begin{equation}\label{def:twisted:cocycle}
\sigma^f:G \times X \rightarrow H, \hspace{5pt} \sigma^f(g,x):=f(gx)^{-1}\sigma(g,x)f(x) \ ,
\end{equation}
is the \emph{twisted cocycle associated to $\sigma$ and $f$}. The two cocycles $\sigma_1$ and $\sigma_2$ are \emph{cohomologous} (or \emph{equivalent}) if 
$$
\sigma_2=\sigma^f_1 \ ,
$$
for some measurable function $f:X \rightarrow H$. 
\end{deft}

Measurable cocycles are quite ubiquitous in mathematics. Indeed one can find them in several different contexts, like in differential geometry (the differentiation cocycle, see \cite[Example 4.2.2]{zimmer:libro}) or in measure theory (the Radon-Nikodym cocycle, see \cite[Example 4.2.3]{zimmer:libro}). In our case, we are going to focus on a large family of cocycles coming from representation theory. 

\begin{deft}\label{def:cocycle:representation}
Let $\rho:G \rightarrow H$ be a continuous representation and let $(X,\mu)$ be any standard Borel measure $G$-space. We define the \emph{cocycle associated to the representation $\rho$} as follows
$$
\sigma_\rho:G \times X \rightarrow H, \hspace{5pt} \sigma_\rho(g,x):=\rho(g) \ , 
$$
for every $g \in G$ and almost every $x \in X$. 
\end{deft}

From the definition it should be clear that any continuous representation naturally determines a measurable cocycle once we have fixed a suitable standard Borel measure $G$-space. Notice that even if the variable $x \in X$ does not arise in the definition above, the cocycle $\sigma_\rho$ actually depends on both the representation $\rho$ and the space $X$. Nevertheless we prefer to omit the latter dependence to avoid a heavy notation. Notice also that when $G$ is a discrete group, any representation is continuous and hence we can always define an associated measurable cocycle.

Another key tool we will need later is the concept of boundary map associated to a measurable cocycle. Here we are going to introduce directly the notion of \emph{generalized boundary map}, even if we will not need it in its full generality. Assume first that $G$ admits a Furstenberg-Poisson boundary $B(G)$ (see \cite{furstenberg:annals} for a precise definition). A well-known example of this situation is when $G$ is a center-free semisimple Lie group without compact factors. In this case the Furstenberg-Poisson boundary $B(G)$ can be identified with the homogeneous space $G/P$, where $P \leq G$ is any minimal parabolic subgroup. If we denote by $\calX_G$ the Riemannian symmetric space associated to $G$, then usually the Furstenberg-Poisson boundary $B(G)$ is strictly contained in the boundary at infinity $\partial_\infty \calX_G$. However, when $G$ has real rank one, the two coincide since it holds $$\textup{codim}_{\partial_{\infty} \calX_G} B(G)=\textup{rank}_{\mathbb{R}}(G)-1 \ . $$ 

Endow now $B(G)$ with its natural Borel structure coming from the Haar sigma-algebra on $G$ and suppose that $H$ acts measurably on a compact completely metrizable space $Y$. 

\begin{deft}\label{def:boundary:map}
Let $\sigma:G \times X \rightarrow H$ be a measurable cocycle. A measurable map $\phi:B(G) \times X \rightarrow Y$ is \emph{$\sigma$-equivariant} if it holds
$$
\phi(g \xi, gx)=\sigma(g,x)\phi(\xi,x) \ , 
$$
for all $g \in G$ and almost every $\xi \in B(G)$ and $x \in X$. A \emph{generalized boundary map} (or simply \emph{boundary map}) is the datum of a measurable map $\phi$ which is $\sigma$-equivariant. 
\end{deft}

The existence and the uniqueness of boundary maps for a measurable cocycle $\sigma$ usually rely on the properties of the cocycle. For instance the \emph{proximality} of the cocycle guarantees the existence of such a map. We are not going to define proximality here and we refer the reader to \cite{furst:articolo} for a detailed exposition. 

Since we introduce the notion of cohomologous cocycles, it is natural to show how boundary maps change along the $H$-cohomology class of a fixed cocycle.

\begin{deft}\label{def:twist:bound:map}
Let $\sigma:G \times X \rightarrow H$ be a measurable cocycle with (generalized) boundary map $\phi:B(G) \times X \rightarrow Y$. Let $f:X \rightarrow H$ be a measurable map. The \emph{boundary map associated to the twisted cocycle $\sigma^f$} is given by 
$$
\phi^f:B(G) \times X \rightarrow Y, \hspace{5pt} \phi^f(\xi,x)=f(x)^{-1}\phi(\xi,x) \ ,
$$
for almost every $\xi \in B(G)$ and $x \in X$.
\end{deft}

We conclude this section by introducing the notion of slice associated to a boundary map. We will need this definition since we are going to assume the atomless property of the slices of a boundary map in order to construct our natural map.

\begin{deft}\label{def:boundary:slice}
Let $\sigma:G \times X \rightarrow H$ be a measurable cocycle and let $\phi:B(G) \times X \rightarrow Y$ be a (generalized) boundary map. For almost every $x \in X$ we define the \emph{slice associated to the point $x$} as follows
$$
\phi_x:B(G) \rightarrow Y, \hspace{5pt} \phi_x(\xi):=\phi(\xi,x) \ ,
$$
for almost every $\xi \in B(G)$. 
\end{deft}

By the equivariance of the map $\phi$ the slices are related by the following equation 
\begin{equation}\label{equation:gamma:slice}
\phi_{gx}(g \  \cdot \ )=\sigma(g,x)\phi_x(\cdot)
\end{equation}
for every $g \in G$ and almost every $x \in X$.

It is worth noticing that for almost every $x \in X$ the slice $\phi_x$ is measurable. Indeed, since we assumed that $X$ is a standard Borel space, we know that the function $\widehat{\phi}:X \rightarrow \textup{Meas}(B(G),Y), \hspace{5pt} \widehat{\phi}(x):=\phi_x,$ is well-defined and measurable by \cite[Lemma 2.6]{fisher:morris:whyte}. 

\subsection{Barycenter of a probability measure} \label{subsec:bary:meas}

In this section we are going to recall the barycenter construction introduced by Douady and Earle in their paper \cite{douady:earle}. As in the classic case of Besson, Courtois and Gallot this machinery will be crucial to construct our natural map. 

Before giving the definition of the barycenter, we first need to recall the notion of Busemann function. Let $b \in \hypkn$ a fixed basepoint. The \emph{Busemann function pointed at $b$} is the function given by
$$
\beta_b: \hypkn \times \partial_\infty \hypkn \rightarrow \bbR, \hspace{5pt} \beta_b(a,\xi):=\lim_{t \to \infty} d(a,c(t))-d(b,c(t)) \ ,
$$
where $c:[0,\infty) \rightarrow \hypkn$ is a geodesic ray starting at $c(0)=b$ and ending at $\xi$. The distance $d$ is the one induced by the standard Riemannian structure on the hyperbolic space $\hypkn$. Fix now a basepoint $o \in \hypkn$. By an abuse of notation we will use the same symbol for the basepoint $o$ in hyperbolic spaces of different dimensions. We are going to denote by $\beta_o(x,\xi)$ the Busemann function pointed at the basepoint $o \in \hypkn$.

Given any topological space $X$, denote by $\calM^1(X)$ the space of positive probability measures on $X$. Consider now any positive probability measure $\nu \in \calM^1(\partial_\infty \hypkn)$ on the boundary at infinity of the hyperbolic space. For our purposes, it will be sufficient to consider the case when $\nu$ does not contain any atom. A crucial property of Busemann functions is given by their \emph{convexity} (see \cite[Chapter 8]{atha:papa}). Using this property we get immediately that the function 
$$
\Lambda_\nu: \hypkn \rightarrow \bbR, \hspace{5pt} \Lambda_\nu(x):=\int_{\partial_\infty \hypkn} \beta_o(x,\xi)d\nu(\xi) \ ,
$$
is convex. Moreover, since the following condition holds
$$
\lim_{x \to \partial_\infty \hypkn} \Lambda_\nu(x)=\infty \ ,
$$
the function $\Lambda_\nu$ attains its minimum inside $\hypkn$. The uniqueness of such minimum is guaranteed by the fact that $\nu$ does not have any atom. We refer the reader to either \cite[Appendix A]{bcg95} or \cite[Proposition 3.7]{bcg98}.

\begin{deft}\label{def:bar:meas}
Let $\nu \in \calM^1(\partial_\infty \hypkn)$ be a positive probability measure which does not contain any atom. The \emph{barycenter of the measure $\nu$} is defined as
$$
\barb(\nu):=\textup{argmin} (\Lambda_\nu) \ .
$$
Notice that the subscript $\calB$ we used in the definition emphasizes the dependence of the barycenter construction on the Busemann functions. 
\end{deft}

Under the assumptions we made on the probability measure $\nu$, its barycenter $\barb(\nu)$ will be a point in $\hypkn$ which satisfies the following properties
\begin{description}

\item[(i)] the barycenter is continuous with respect to the weak-${}^*$ topology on the space $\calM^1(\partial_\infty \hypkn)$. More precisely if $\nu_k \rightarrow \nu$ in the weak-${}^*$ topology it holds that 
$$
\lim_{k \to \infty} \barb(\nu_k)=\barb(\nu) \ ;
$$

\item[(ii)] the barycenter is $\upG(n)$-equivariant. Recall first that $\upG(n)$ is the isometry group of the Riemannian symmetric space $\hypkn$. Then for every $g \in \upG(n)$ we have that
$$
\barb(g_*\nu)=g\barb(\nu) \ ,
$$
for every $\nu \in \calM^1(\partial_\infty \hypkn)$. The symbol $g_*\nu$ stands for the push-forward measure of $\nu$ with respect to the isometry $g$; 

\item[(iii)] the barycenter satisfies an implicit equation given by
\begin{equation}\label{equation:bar}
\int_{\partial_\infty \hypkn} d\beta_o|_{(\barb(\nu),\xi)}( \cdot )d\nu(\xi)=0 \ ,
\end{equation}
where $\nu \in \calM^1(\partial_\infty \hypkn)$ and $d\beta_o$ denotes the differential of the Busemann function pointed at $o \in \hypkn$. This property will be crucial to proving the smoothness of the slices of our natural map and to obtaining also the estimate on the Jacobian.

\end{description}

\subsection{Family of Patterson-Sullivan measures and BCG natural map} \label{subsec:patterson:sullivan}

In this section we are going to recall the definition of Patterson-Sullivan measures and the notion of natural map associated to a representation. We refer the reader to \cite{bcg95,bcg96,bcg98,franc06:articolo,franc09} for a more detailed exposition about these notions. 

Let $\upG(n)$ be equal to either $\po(n,1),\pu(n,1)$ or $\psp(n,1)$. It is well known that the Riemannian symmetric space associated to the rank one Lie group $\upG(n)$ is the hyperbolic space $\hypkn$ on a suitable division algebra $\upK$. More precisely we have that $\upK=\bbR$ if $\upG(n)=\po(n,1)$, $\upK=\bbC$ if $\upG(n)=\pu(n,1)$ and $\upK=\bbH$ if $\upG(n)=\psp(n,1)$. In all these cases, we are going to normalize the Riemannian metric on $\hypkn$ so that the sectional curvature has maximum value equal to $-1$. Denote by $d=\dim_{\bbR}\upK$ and assume that $d(n-1)\geq2$. 

Fix now a torsion-free (uniform) lattice $\Gamma \leq \upG(n)$. 

\begin{deft}\label{def:critical:exp}
Let $x \in \hypkn$ be any point and let $s >0$ be a real number. The \emph{s-Poincar\'e series pointed at $x$} is given by the following sum
$$
\calP(s;x):=\sum_{\gamma \in \Gamma} e^{-sd(\gamma x,x)} \ ,
$$
where $d$ stands for the distance induced by the fixed Riemannian structure on the space $\hypkn$. The \emph{critical exponent} associated to the lattice $\Gamma$ is defined as
$$
\delta_\Gamma:=\inf \{ s>0| \calP(s;x) < \infty\} \ . 
$$
The definition of critical exponent does not depend on the choice of the particular point $x \in \hypkn$ we fixed. 
\end{deft}

The critical exponent associated to a torsion-free (uniform) lattice in a rank-one Lie group is always finite and equal to
$$
\delta_\Gamma=d(n+1)-2 \ ,
$$
as shown for instance in \cite[Theorem 2]{albuquerque97}. We remind the reader that when $s=\delta_\Gamma$ the Poincar\'e series $\calP(s;x)$ diverges, that is $\calP(\delta_\Gamma;x)=+\infty$ \cite{sullivan79,burger:mozes,Yue96}, and for this reason we call $\Gamma$ a group of \emph{divergence type}. 

Now we are ready to give the definition of Patterson-Sullivan measures. This notion fits into a more general concept of conformal density.

\begin{deft}\label{def:patt:sull}
Let $\Gamma \leq \upG(n)$ be a torsion-free (uniform) lattice. Fix a positive real number $\alpha >0$. An \emph{$\alpha$-conformal density} for the lattice $\Gamma$ is a measurable map 
$$
\nu:\hypkn \rightarrow \calM^1(\partial_\infty \hypkn), \hspace{5pt} \nu(a):=\nu_a \ , 
$$
which satisfies the following conditions:
\begin{description}
\item[(i)] it is $\Gamma$-equivariant, that is $\nu_{\gamma a}=\gamma_\ast \nu_a$ for every $\gamma \in \Gamma$ and every $a \in \hypkn$. The symbol $\gamma_\ast$ stands for the push-forward measure with respect $\gamma$. 
\item[(ii)] given two different points $a,b \in \hypkn$, the measure $\nu_a$ is absolutely continuous with respect to $\nu_b$ and the Radon-Nikodym derivative is given by 
$$
\frac{d\nu_a}{d\nu_b}(\xi)=e^{-\alpha \beta_b(a,\xi)} \ ,
$$
where $\xi \in \partial_\infty \hypkn$ and $\beta_b(a,\xi)$ is the Busemann function pointed at $b$.  
\end{description}
When $\alpha$ is equal to the critical exponent $\delta_\Gamma$, the $\delta_\Gamma$-conformal density associated to $\Gamma$ is called the \emph{Patterson-Sullivan density}.
\end{deft}

Given a lattice $\Gamma \leq \upG(n)$, there always exists a Patterson-Sullivan density associated to it. Moreover it is essentially unique by the doubly ergodic action of $\Gamma$ on the boundary at infinity $\partial_\infty \hypkn$ (see for instance \cite{sullivan79,nicholls,burger:mozes,roblin,franc09}). It is worth mentioning that the construction of the Patterson-Sullivan density has been extended by Albuquerque \cite{albuquerque97,albuquerque99} to higher rank lattices in a semisimple Lie group $G$ of non-compact type. In the higher rank case the support of the measures is strictly smaller than the boundary at infinity of symmetric space $\calX_G$ associated to $G$. Indeed the support can be identified with the Furstenberg-Poisson boundary $B(G)$ (see Section \ref{subsec:zimmer:cocycle} for the definition). 

We conclude the section by recalling briefly the construction of the natural map associated to a representation. This can help the reader to understand how we are going to adapt the construction to the case of measurable cocycles. Let $\Gamma \leq \upG(n)$ be as above and consider $\rho:\Gamma \rightarrow \upG(m)$ a non-elementary representation, with $d(n-1)\geq2$ and $m \geq n$. Denote by $\{ \nu_a \}_{a \in \hypkn}$ the Patterson-Sullivan density of measure (actually its image).

Since the representation $\rho$ is non-elementary, by \cite[Corollary 3.2]{burger:mozes} there exists a measurable $\rho$-equivariant map 
$$
\varphi:\partial_\infty \hypkn \rightarrow \partial_\infty \hypkm \ .
$$
Additionally this map is essentially injective by both \cite[Lemma 2.3]{savini:articolo} and \cite[Lemma 2.5]{savini2:articolo}. This implies that for almost every $a \in \hypkn$ the push-forward measure $\varphi_\ast(\nu_a)$ has no atom. This condition allows us to define the Besson-Courtois-Gallot natural map associated to $\rho$.

\begin{deft}\label{def:natural:map:rep}
Let $\Gamma \leq \upG(n)$ be a (uniform) lattice and let $\rho:\Gamma \rightarrow \upG(m)$ be a non-elementary representation, with $d(n-1)\geq 2$ and $m \geq n$. If $\varphi:\partial_\infty \hypkn \rightarrow \partial_\infty \hypkm$ is the associated measurable map, we define the \emph{natural map associated to $\rho$} as follows
$$
F: \hypkn \rightarrow \hypkm, \hspace{5pt} F(a):=\barb(\varphi_\ast(\nu_a)) \ . 
$$
\end{deft}

The map defined above is smooth and $\rho$-equivariant, that is $F(\gamma a)=\rho(\gamma)F(a)$ for every $a \in \hypkn$. For every positive integer $p \in \bbN$, we define the \emph{$p$-Jacobian of $F$ at a} as
$$
\jac^p_a F:= \max_{u_1,\ldots,u_p \in T_a \hypkn} \lVert D_aF(u_1) \wedge \cdots \wedge D_aF(u_p) \rVert_{m} \ ,
$$
where $\{ u_1,\ldots,u_p\}$ is an orthonormal $p$-frame on the tangent space $T_a\hypkn$ with respect to the standard Riemannian metric $g_n$ and $\lVert  \ \cdot \ \rVert_m$ stands for the norm induced by $g_m$. When $p=n \cdot d$, that is, it is equal to the real dimension of $\hypkn$, we are going to denote the $p$-Jacobian simply by $\jac_aF$. For the natural map $F:\hypkn \rightarrow \hypkm$ it holds 
$$
\jac_aF \leq 1 \ ,
$$
for every $a \in \hypkn$ and the equality is attained if and only if the map $D_aF:T_a \hypkn \rightarrow T_{F(a)}\hypkm$ is an isometric embedding (see \cite[Lemma 7.2]{bcg95} for a proof of the inequality and \cite[Appendix B]{bcg95} for the study of the equality case). 

Since the natural map is defined using the barycenter and the latter satisfies Equation (\ref{equation:bar}), in this context one can verify that it holds
\begin{equation}\label{equation:natural:map}
\int_{\partial_\infty \hypkn} d\beta_o|_{(F(a),\varphi(\xi))} ( \  \cdot \  )d\nu_a(\xi)=0 \ .
\end{equation}
By differentiating the previous equation, for every $a \in \hypkn, u \in T_a \hypkn, v \in T_{F(a)} \hypkm$ one obtains that
\begin{align}
&\int_{\partial_\infty \hypkn} \nabla d\beta_o|_{(F(a),\varphi(\xi))} (D_aF(u),v)d\nu_a(\xi)=\\
&=\delta_\Gamma \int_{\partial_\infty \hypkn} d\beta_o|_{(F(a),\varphi(\xi))} (v) d\beta_o|_{(a,\xi)}(u)d\nu_a(\xi) \nonumber \ . 
\end{align}
Here $\nabla$ is the Levi-Civita connection associated to the natural Riemannian metric on $\hypkm$. We warn the reader that the Busemann functions that appear in the second line of the equation above refer to hyperbolic spaces of different dimensions (the first is defined on $\hypkm$ and the second one on $\hypkn$, respectively). 

\section{Natural maps associated to Zimmer's cocycles} \label{sec:natural:map}

In this section we are going to define the natural map associated to a measurable cocycle of a \emph{uniform} hyperbolic lattice. The key point in the construction will be to consider measurable cocycles which admit a boundary map whose slices are \emph{atomless}, that is the push-forward of the Patterson-Sullivan measure has no atom. This assumption will allow us to mimic the techniques used for non-elementary representations. 

\begin{proof}[Proof of Theorem \ref{teor:natural:map}]
Given $x \in X$, consider
$$
\phi_x:\partial_\infty \hypkn \rightarrow \partial_\infty \hypkm \ ,
$$
the slice of the boundary map $\phi$. For any $a \in \hypkn$, let $\nu_a$ be the Patterson-Sullivan measure pointed at $a$. If we consider the push-forward measure $(\phi_x)_*(\nu_a)$ this has no atoms by assumption. Hence we can apply the barycenter to define our desired map. More precisely, define
\begin{equation}\label{equation:natural:map:cocycle}
F:\hypkn \times X \rightarrow \hypkm, \hspace{5pt} F(a,x):=\barb((\phi_x)_\ast(\nu_a)) \ , 
\end{equation}
for every $a \in \hypkn$ and almost every $x \in X$. 

Clearly the map $F$ is well-defined by what we have said so far. Now we prove the $\sigma$-equivariance. Notice first that for almost every $x \in X$ we have a map $F_x:\hypkn \rightarrow \hypkm$ given by $F_x(a):=F(a,x)$. We call this map the \emph{$x$-slice of the map $F$}. Additionally we can suppose that the full-measure subset of $X$ on which the slices of $F$ are defined is $\Gamma$-invariant. Hence, given $\gamma \in \Gamma$, it holds
\begin{align*}
F(\gamma a,\gamma x)=&\barb((\phi_{\gamma x})_\ast(\nu_{\gamma a}))=\\
=&\barb((\phi_{\gamma x})_\ast(\gamma_\ast \nu_a))=\\
=&\barb(\sigma(\gamma,x)_\ast ((\phi_x)_\ast(\nu_a)))=\\
=&\sigma(\gamma,x)\barb((\phi_x)_\ast(\nu_a))=\sigma(\gamma,x)F(a,x) \ , 
\end{align*}
for every $a \in \hypkn$ and almost every $x \in X$. To pass from the first line to the second one we used the $\Gamma$-equivariance of the Patterson-Sullivan density and to move from the second line to the third one we exploited Equation (\ref{equation:gamma:slice}). To conclude we used the fact that the barycenter is $\upG(m)$-equivariant. The previous computation shows the $\sigma$-equivariance of the map $F$, as desired. 

Since the barycenter is characterized by the implicit Equation (\ref{equation:bar}), for almost every $x \in X$ we have that the slice $F_x$ satisfies the equation
\begin{equation}\label{equation:slice:implicit}
\int_{\partial_\infty \hypkn} d\beta_o|_{(F_x(a),\phi_x(\xi))}(\cdot)d\nu_a(\xi)=0 \ , 
\end{equation}
for every $a \in \hypkn$. By differentiating with respect to the variable $a$ the previous equation, for almost every $x \in X$ and every $a \in \hypkn, u \in T_a\hypkn,v \in T_{F_x(a)}\hypkm$ we obtain that 
\begin{align}\label{equation:diff:slice}
&\int_{\partial_\infty \hypkn} \nabla d\beta_o|_{(F_x(a),\phi_x(\xi))} (D_aF_x(u),v)d\nu_a(\xi)=\\
&=\delta_\Gamma \int_{\partial_\infty \hypkn} d\beta_o|_{(F_x(a),\phi_x(\xi))} (v) d\beta_o|_{(a,\xi)}(u)d\nu_a(\xi) \nonumber \ , 
\end{align}
where $\nabla$ is the Levi-Civita connection on $\hypkm$. Applying the same reasoning of Besson, Courtois and Gallot exposed in \cite{bcg95,bcg96} to the equation above one gets that the slice $F_x:\hypkn \rightarrow \hypkm$ is smooth for almost every $x \in X$, as claimed. 

We want to conclude by proving the estimate on the Jacobian of $F_x$. Fix an $x$ for which the slice $F_x$ is smooth. Recall that, for every $a \in \hypkn$, we are allowed to define three quadratic forms, one on the tangent space $T_a\hypkn$ and two on the tangent space $T_{F(a)}\hypkm$, respectively. More precisely, given $u \in T_a\hypkn$ and $v \in T_{F_x(a)}\hypkm$ we define 
\begin{align*}
h'_{a,x}(u,u)&:=\langle H'_{a,x}(u),u \rangle_n=\int_{\partial_\infty \hypkn} \left(d\beta_o|_{(a,\xi)}(u)\right)^2d\nu_a(\xi) \ , \\
h_{a,x}(v,v)&:=\langle H_{a,x}(v),v \rangle_m = \int_{\partial_\infty \hypkn} \left(d\beta_o|_{(F_x(a),\phi_x(\xi))}\right)^2d\nu_a(\xi) \ , \\
k_{a,x}(v,v)&:=\langle K_{a,x}(v),v \rangle_m = \int_{\partial_\infty \hypkn} \nabla d\beta_o|_{(F_x(a),\phi_x(\xi))}(v,v)d\nu_a(\xi) \ . 
\end{align*}
Here $H'_{a,x},H_{a,x}$ and $K_{a,x}$ are the endomorphisms associated to the symmetric bilinear forms with respect to the scalar product $\langle \cdot, \cdot \rangle_n$ (respectively $\langle \cdot,\cdot \rangle_m$) associated to the natural Riemannian metric on $\hypkn$ (respectively $\hypkm$). It is worth noticing that both $h_{a,x}$ and $h'_{a,x}$ are positive semidefinite bilinear forms, thus we are allowed to consider their square roots. As a consequence, applying the Cauchy-Schwarz inequality as in \cite[Section 2]{bcg98} to Equation (\ref{equation:diff:slice}), we get
\begin{equation}\label{equation:bilinear:forms}
k_{a,x}(D_aF_x(u),v) \leq \delta_\Gamma \left(h_{a,x}(v,v) \right)^{\frac{1}{2}}(h'_{a,x}(u,u))^{\frac{1}{2}} \ .
\end{equation}
Let $V_{a,x}:=D_aF_x(T_a\hypkn)$ be the image of the tangent space through the derivative at $a$ of the slice $F_x$. Denote by $h^V_{a,x},k^V_{a,x}$ the restrictions of the bilinears forms $h_{a,x},k_{a,x}$ to the subspace $V$. Let $H^V_{a,x},K^V_{a,x}$ be the associated endomorphisms. Set $p=d \cdot n$. By taking the determinant of Equation (\ref{equation:bilinear:forms}) one gets the following sequence of estimates
\begin{align*}
\det(K^V_{a,x})\jac_aF_x &\leq (\delta_\Gamma)^{p}\left( \det H^V_{a,x} \right)^{\frac{1}{2}} \left( \det H_{a,x}' \right)^{\frac{1}{2}} \leq \\
&\leq (\delta_\Gamma)^{p} \left( \det H^{V}_{a,x} \right)^{\frac{1}{2}} \left( \tr H'_{a,x}/p\right)^{\frac{p}{2}} \leq \\
&\leq p^{-\frac{p}{2}}\cdot (\delta_\Gamma)^{p} \left( \det H^V_{a,x} \right)^{\frac{1}{2}}.
\end{align*}
It is worth noticing that the estimate above depends on a suitable choice of basis of both $T_a\hypkn$ and $V_{a,x}$. We refer the reader to \cite[Lemma 5.3]{bcg98} for more details.

Finally, by applying \cite[Proposition B.1]{bcg95}, we obtain that
$$
\jac_aF_x \leq \frac{ (\delta_\Gamma)^{p} }{p^{-\frac{p}{2}}} \frac{ \left(\det H^V_{a,x} \right)^{\frac{1}{2}}}{\det(K^V_{a,x})} \leq 1 \ ,
$$
and the desired estimate is proved. When the equality is attained, the proof that $D_aF_x:T_a \hypkn \rightarrow T_{F_x(a)}\hypkm$ is an isometric embedding is analogous to the one exposed in \cite{bcg95,bcg96,bcg98} and hence we omit it. 
\end{proof}

\begin{oss}\label{oss:uniform:volume}
Notice that in the proof of Theorem \ref{teor:natural:map} we never use the uniformity of the lattice $\Gamma$. This implies that the same construction can be suitably extended also to torsion-free \emph{non-uniform} lattices. In the latter case one would like to prove a property similar to the properly ending condition defined in \cite{franc06:articolo,franc09}. However we cannot understand a clear way to do it in this context. 
\end{oss}

\begin{oss}
One could ask when a boundary map has atomless slices and how restrictive this hypothesis is. When $\sigma$ is the measurable cocycle associated to a self-coupling of a uniform lattice $\Gamma \leq \upG(n)$, the slices of the boundary map are atomless by either \cite[Lemma 3.6]{sauer:articolo} or \cite[Lemma 3.1]{savini4}. More generally if $(X,\mu_X)$ is ergodic and the cocycle is Zariski dense, this is sufficient to guarantee atomless slices. 
\end{oss}

Recall that in Section \ref{subsec:zimmer:cocycle} we discussed how to construct a suitable measurable cocycle starting from a representation $\rho:\Gamma \rightarrow \upG(m)$ once we fixed a standard Borel probability $\Gamma$-space. One could naturally ask whether there exists a relation between the natural map associated to $\rho$ defined in \cite{franc06:articolo,franc09} and the natural map we defined in Theorem \ref{teor:natural:map}. Their link is given by the following:

\begin{prop}\label{prop:natural:rep}
Let $\Gamma \leq \upG(n)$ be a torsion-free uniform lattice and let $\rho:\Gamma \rightarrow \upG(m)$ be a non-elementary representation, with $m \geq n$. Fix a standard Borel probability $\Gamma$-space $(X,\mu_X)$. Denote by $\widetilde{F}:\hypkn \rightarrow \hypkm$ and by $\sigma_\rho:\Gamma \times X \rightarrow \upG(m)$ the natural map and the measurable cocycle associated to $\rho$, respectively. Then the natural map associated to $\sigma_\rho$ is given by 
$$
F:\hypkn \times X \rightarrow \hypkm, \hspace{5pt} F(a,x):=\widetilde{F}(a) \ ,
$$
for every $a \in \hypkn$ and almost every $x \in X$.
\end{prop}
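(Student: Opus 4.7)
The plan is to verify that the natural map for $\sigma_\rho$ constructed via Theorem \ref{teor:natural:map} coincides with the classical Besson-Courtois-Gallot natural map $\widetilde{F}$, essentially because the cocycle $\sigma_\rho$ is independent of $x \in X$ and so its boundary map can be chosen to be constant in the $x$-variable. Concretely, I would first produce a boundary map for $\sigma_\rho$ out of the boundary map $\varphi: \partial_\infty \hypkn \rightarrow \partial_\infty \hypkm$ associated to the representation $\rho$ (such a $\varphi$ exists by \cite[Corollary 3.2]{burger:mozes} and is essentially injective by \cite[Lemma 2.3]{savini:articolo} or \cite[Lemma 2.5]{savini2:articolo}).

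Define
$$
\phi : \partial_\infty \hypkn \times X \longrightarrow \partial_\infty \hypkm, \qquad \phi(\xi, x) := \varphi(\xi).
$$
Then $\phi$ is measurable, and for every $\gamma \in \Gamma$ and almost every $(\xi, x)$,
$$
\phi(\gamma \xi, \gamma x) = \varphi(\gamma \xi) = \rho(\gamma) \varphi(\xi) = \sigma_\rho(\gamma, x) \phi(\xi, x),
$$
so $\phi$ is $\sigma_\rho$-equivariant. Its slices are $\phi_x = \varphi$ for almost every $x \in X$, hence essentially injective. Thus $\phi$ fulfills the hypotheses of Theorem \ref{teor:natural:map} and is therefore a legitimate choice for the boundary map attached to $\sigma_\rho$.

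Having made this choice, the natural map $F : \hypkn \times X \rightarrow \hypkm$ produced by Theorem \ref{teor:natural:map} is given by the explicit formula (\ref{equation:natural:map:cocycle}), so
$$
F(a, x) = \barb\bigl((\phi_x)_\ast(\nu_a)\bigr) = \barb\bigl(\varphi_\ast(\nu_a)\bigr),
$$
for every $a \in \hypkn$ and almost every $x \in X$. But the right-hand side is precisely the definition of the Besson-Courtois-Gallot natural map $\widetilde{F}(a)$ associated to $\rho$ (Definition \ref{def:natural:map:rep}). Therefore $F(a, x) = \widetilde{F}(a)$, which is the claim.

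The only subtle point is that Theorem \ref{teor:natural:map} assumes \emph{essential uniqueness} of the boundary map, and one should check that this is compatible with our choice; however, since the construction of $F$ from $\phi$ in the proof of Theorem \ref{teor:natural:map} only uses the fact that $\phi$ is an essentially injective $\sigma_\rho$-equivariant boundary map, the conclusion goes through for the specific $\phi$ above, regardless of whether other boundary maps exist. Hence no real obstacle arises: the entire argument reduces to unwinding the definitions once the constant-in-$x$ boundary map is exhibited.
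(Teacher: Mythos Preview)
Your proof is correct and follows essentially the same approach as the paper: define the boundary map for $\sigma_\rho$ as $\phi(\xi,x):=\varphi(\xi)$ using the $\rho$-equivariant boundary map $\varphi$, observe that each slice equals $\varphi$ and is therefore essentially injective, and then unwind the barycenter definition to recover $\widetilde{F}$. The paper additionally remarks that $\varphi$ is essentially unique by double ergodicity of the $\Gamma$-action on $\partial_\infty\hypkn$, while you instead note (correctly) that the construction only requires the existence of some essentially injective equivariant boundary map; both observations are adequate here.
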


\begin{proof}
Since $\rho$ is non-elementary, by \cite{burger:mozes,franc09} there exists a measurable boundary map $\widetilde{\varphi}:\partial_\infty \hypkn \rightarrow \partial_\infty \hypkm$ which is $\rho$-equivariant. Additionally, since the $\Gamma$ action on the boundary $\partial_\infty \hypkn$ is doubly ergodic, the map is essentially unique (\cite{sullivan79,nicholls,burger:mozes,roblin,franc09}) and essentially injective (\cite[Lemma 2.3]{savini:articolo},\cite[Lemma 2.5]{savini2:articolo}). 

The previous map allows to define an essentially unique boundary map associated to $\sigma_\rho$ as follows
$$
\phi:\partial_\infty \hypkn \times X \rightarrow \partial_\infty \hypkm, \hspace{5pt} \phi(\xi,x):=\widetilde{\varphi}(\xi) \ ,
$$
for almost every $\xi \in X$ and every $x \in X$. Moreover, since every slice $\phi_x$ coincides with the map $\widetilde{\varphi}$, every slice is essentially injective. Hence we are in the hypothesis of Theorem \ref{teor:natural:map} and we are allowed to construct the natural map associated to $\sigma_\rho$. By definition we have that
$$
F(a,x)=\barb((\phi_x)_\ast(\nu_a))=\barb((\widetilde{\varphi})_\ast(\nu_a))=\widetilde{F}(a) \ , 
$$
and the claim is proved. 
\end{proof}

We conclude the section by showing how natural maps change in the $\upG(m)$-cohomology class of a given measurable cocycle. 

\begin{prop}\label{prop:natural:cohomology}
Let $\Gamma \leq \upG(n)$ be a torsion-free uniform lattice and fix $(X,\mu_X)$ a standard Borel probability $\Gamma$-space. Let $\sigma:\Gamma \times X \rightarrow \upG(m)$ be a measurable cocycle which admits a natural map $F:\hypkn \times X \rightarrow \hypkm$. Given a measurable map $f:X \rightarrow \upG(m)$, the natural map associated to the cocycle $\sigma^f$ is given by 
$$
F^f:\hypkn \times X \rightarrow \hypkm, \hspace{5pt} F^f(a,x)=f(x)^{-1}F(a,x) \ . 
$$
\end{prop}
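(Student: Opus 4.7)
The plan is to unwind both sides from their definitions and reduce everything to the $\upG(m)$-equivariance of the barycenter construction (property (ii) of Subsection \ref{subsec:bary:meas}).

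First I would observe that the twisted cocycle $\sigma^f$ admits an (essentially unique) boundary map whose slices are still essentially injective, so that Theorem \ref{teor:natural:map} applies and the natural map for $\sigma^f$ is well-defined. Indeed, by Definition \ref{def:twist:bound:map} the boundary map associated to $\sigma^f$ is
\[
\phi^f(\xi,x) = f(x)^{-1}\phi(\xi,x),
\]
so that the slice at $x$ is the composition $\phi^f_x = f(x)^{-1}\circ \phi_x$ of $\phi_x$ with the isometry $f(x)^{-1}\in\upG(m)$ of $\hypkm$. In particular $\phi^f_x$ is essentially injective whenever $\phi_x$ is, and this holds for almost every $x\in X$ by assumption.

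Next I would apply the barycenter definition of the natural map (Equation \eqref{equation:natural:map:cocycle}) to $\sigma^f$ and use that push-forward commutes with composition. For every $a\in\hypkn$ and almost every $x\in X$ we compute
\[
(\phi^f_x)_\ast(\nu_a) = (f(x)^{-1})_\ast\bigl((\phi_x)_\ast(\nu_a)\bigr),
\]
and then, invoking the $\upG(m)$-equivariance of $\barb$,
\[
\barb\bigl((\phi^f_x)_\ast(\nu_a)\bigr) = f(x)^{-1}\,\barb\bigl((\phi_x)_\ast(\nu_a)\bigr) = f(x)^{-1}F(a,x),
\]
which is exactly $F^f(a,x)$. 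This identifies $F^f$ with $f(x)^{-1}F(a,x)$ pointwise almost everywhere.

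Finally, as a consistency check, I would verify directly from the definition that the map $(a,x)\mapsto f(x)^{-1}F(a,x)$ is $\sigma^f$-equivariant: using $\sigma^f(\gamma,x)=f(\gamma x)^{-1}\sigma(\gamma,x)f(x)$ and the $\sigma$-equivariance of $F$ one gets
\[
f(\gamma x)^{-1}F(\gamma a,\gamma x) = f(\gamma x)^{-1}\sigma(\gamma,x)F(a,x) = \sigma^f(\gamma,x)\,f(x)^{-1}F(a,x),
\]
so the formula gives an honest $\sigma^f$-equivariant map. There is no real obstacle here; the only subtle point is confirming that the essential injectivity of slices is preserved under $f(x)^{-1}$, which is immediate because $f(x)^{-1}$ acts as a homeomorphism of $\partial_\infty\hypkm$.
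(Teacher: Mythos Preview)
Your proof is correct and follows essentially the same approach as the paper: identify the twisted boundary map $\phi^f_x = f(x)^{-1}\phi_x$, note that essential injectivity is preserved because $f(x)^{-1}$ acts by a homeomorphism, and then use the $\upG(m)$-equivariance of the barycenter to conclude. The only addition is your final consistency check of $\sigma^f$-equivariance, which the paper omits but which does no harm.
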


\begin{proof}
Denote by $\phi:\partial_\infty \hypkn \times X \rightarrow \partial_\infty \hypkm$ the boundary map associated to $\sigma$. Recall by Definitions \ref{def:coom:cocycle} and \ref{def:twist:bound:map} that the twisted cocycle
$$
\sigma^f:\Gamma \times X \rightarrow \upG(m), \hspace{5pt} \sigma^f(\gamma,x)=f(\gamma x)^{-1}\sigma(\gamma,x)f(x) \ ,
$$
admits as boundary map
$$
\phi^f:\partial_\infty \hypkn \times X \rightarrow \partial_\infty \hypkm, \hspace{5pt} \phi^f(\xi,x)=f(x)^{-1}\phi(\xi,x) \ .
$$
Notice that $f$ takes values into $\upG(m)$, hence if almost every slice of $\phi$ is atomless, the same holds for $\phi^f$. By definition of the associated natural map we have
$$
F^f(a,x)=\barb((\phi^f_x)_\ast(\nu_a))=\barb((f^{-1}(x)\phi_x)_\ast(\nu_a)) \ .
$$
Since the barycenter is $\upG(m)$-equivariant we obtain
$$
\barb((f^{-1}(x)\phi_x)_\ast(\nu_a))=f(x)^{-1}\barb((\phi_x)_\ast(\nu_a))=f(x)^{-1}F(a,x) \ ,
$$
and the statement follows. 
\end{proof}

\section{Natural volume of Zimmer's cocycles} \label{sec:vol:coc} 

In this section we are going to introduce the notion of natural volume of a measurable cocycle of a \emph{uniform} hyperbolic lattice. As already discussed in the introduction, even if we are going to focus our attention only on uniform lattices, the same results of the section will hold also in the non-uniform case. Notice also that the definition we are going to give differs from the one given in \cite{moraschini:savini,moraschini:savini:2} since ours relies on the differentiability of the slices of \emph{essentially bounded} equivariant maps (see Definition \ref{def:volume:map}). Moreover the rigidity result we are going to obtain will refer to cocycles associated to lattices of $\upG(n)$ with image into $\upG(m)$, with $m$ possibly greater than or equal to $n$. 

Let $\Gamma \leq \upG(n)$ be a torsion-free uniform lattice and fix $(X,\mu_X)$ a standard Borel probability $\Gamma$-space. Denote by $\hypkn$ the hyperbolic space over the division algebra $\upK$ associated to $\upG(n)$. Set $d=\dim_{\bbR}\upK$ and assume $d(n-1) \geq 2$. Take $m \geq n$ and consider a measurable cocycle $\sigma:\Gamma \times X \rightarrow \upG(m)$ which admits a measurable $\sigma$-equivariant map $\Phi:\hypkn \times X \rightarrow \hypkm$ whose slice $\Phi_x:\hypkn \rightarrow \hypkm, \hspace{5pt} \Phi_x(a):=\Phi(a,x)$ is differentiable, for almost every $x \in X$. This implies that we can consider the determinant associated to the pullback of the metric $g_m$ with respect to the slice $\Phi_x$. More precisely, if we set $p=d \cdot n$, we can define the $p$-form 
$$
(\omega_x)_a(u_1,\ldots,u_p):=\sqrt{\det((\Phi_x^\ast) g_m)}(u_1,\ldots,u_p)=\sqrt{ \det \langle D_a\Phi_x(u_i),D_a\Phi_x(u_j) \rangle_m}\ ,
$$
for almost every $x \in X$, every $a \in \hypkn$ and every $u_1,\ldots,u_p \in T_a\hypkn$. In this way we get a family $\{ \omega_x \}_{x \in X}$ of differential forms on $\hypkn$. For almost every $x \in X$ and every $a \in \hypkn$, we can exploit the Riemannian structure on $\hypkn$ to define the norm
$$
\lVert (\omega_x)_a \rVert_\infty:= \max_{u_1,\ldots,u_p \in T_a\hypkn} |(\omega_x)_a(u_1,\ldots,u_p)| \ ,
$$
where the set $\{ u_1,\ldots, u_p \}$ varies on the set of all the possible orthonormal frames of $T_a\hypkn$. We are going to say that $\Phi$ is \emph{essentially bounded} (or has \emph{essentially bounded slices}) if there exists a real number $C>0$ such that 
$$
\lVert (\omega_x)_a \rVert_\infty < C \ ,
$$
for almost every $x \in X$ and every $a \in \hypkn$. Notice that this condition is not so restrictive, since for instance the natural map associated to a measurable cocycle satisfies this property (see Remark \ref{oss:essential:bound}).

Assume now that $\Phi$ is essentially bounded. We are allowed to integrate with respect to the $x$-variable and in this way we get a well-defined differential form on $\hypkn$, that is
$$
\widetilde{\omega}_X:=\int_X \omega_x d\mu_X(x) \in \Omega^p(\hypkn; \bbR) \ , 
$$
$$
\widetilde{\omega}_X(u_1,\ldots,u_p)=\int_X\omega_x(u_1,\ldots,u_p)d\mu_X(x) \ .
$$
We claim that $\widetilde{\omega}_X$ is $\Gamma$-invariant, that is $\widetilde{\omega}_X \in \Omega^p(\hypkn;\bbR)^\Gamma$ and hence it induces a well-defined differential form $\omega_X \in \Omega^p(\Gamma \backslash \hypkn;\bbR)$. More precisely, let $\gamma \in \Gamma$. We need to show that 
$$
\gamma^\ast \widetilde{\omega}_X=\widetilde{\omega}_X \ ,
$$
or equivalently 
$$
\widetilde{\omega}_X(D_a\gamma(u_1),\ldots,D_a\gamma(u_p))=\widetilde{\omega}_X(u_1,\ldots,u_p) \ ,
$$
for every $a \in \hypkn$ and $u_1,\ldots,u_p \in T_a\hypkn$. It holds

\begin{align*}
\gamma^\ast \widetilde{\omega}_X(u_1,\ldots,u_p)&=\omega_X(D_a\gamma(u_1),\ldots,D_a\gamma(u_p))=\\
&=\int_X \omega_x(D_a\gamma (u_1),\ldots,D_a\gamma (u_p))d\mu_X(x)=\\
&=\int_X \omega_{\gamma y}(D_a\gamma (u_1),\ldots,D_a\gamma (u_p))d\mu_X(y) =(\bullet ) \ ,
\end{align*}
where we set $x=\gamma y$ and we used the fact that $\Gamma$ acts on $X$ via measure preserving transformations. From the formula above we can argue 
\begin{align*}
(\bullet)&=\int_X  \sqrt{\det \left( \langle  D_{\gamma a}\Phi_{\gamma y}(D_a\gamma(u_i)) ,D_{\gamma a}\Phi_{\gamma y}(D_a\gamma(u_j)) \rangle_m  \right)}d\mu_X(y)=\\
&=\int_X  \sqrt{\det \left( \langle  D_{a}(\Phi_{\gamma y}\circ \gamma)(u_i)) ,D_{a}(\Phi_{\gamma y}\circ \gamma)(u_j)) \rangle_m  \right)}d\mu_X(y)=\\
&=\int_X  \sqrt{\det \left( \langle  D_{a}(\sigma(\gamma,y)\Phi_y)(u_i)) ,D_{a}(\sigma(\gamma,y)\Phi_y)(u_j)) \rangle_m  \right)}d\mu_X(y)=\\
&=\int_X  \sqrt{\det \left( \langle  D_{a}(\Phi_y)(u_i)) ,D_{a}(\Phi_y)(u_j)) \rangle_m  \right)}d\mu_X(y)=\widetilde{\omega}_X(u_1,\ldots,u_p) \ , 
\end{align*}
where we used the $\sigma$-equivariance of $\Phi$ to pass from the second line to the third one and we exploited the fact that the cocycle $\sigma$ takes value into the isometry group $\upG(m)$ to conclude. Hence we get that $\widetilde{\omega}_X \in \Omega^p(\hypkn;\bbR)^\Gamma$ and so we obtain a well-defined differential form $\omega_X \in \Omega^p(\Gamma \backslash \hypkn;\bbR)$.

\begin{deft}\label{def:volume:map}
Let $\Gamma \leq \upG(n)$ and let $(X,\mu_X)$ be a standard Borel probability $\Gamma$-space. Let $\sigma:\Gamma \times X \rightarrow \upG(m)$ be a measurable cocycle, with $m \geq n$. Denote by $\mathscr{D}(\sigma)$ the set of essentially bounded $\sigma$-equivariant maps with differentiable slices. Given $\Phi \in \mathscr{D}(\sigma)$, we define the \emph{volume associated to the map $\Phi$} as 
$$
\vol(\Phi):=\int_{\Gamma \backslash \hypkn} \omega_X=\int_{\Gamma \backslash \hypkn} \int_X \omega_xd\mu_X(x)=\int_{\Gamma \backslash \hypkn} \int_X \sqrt{\det(\Phi^\ast_x)g_m} d \mu_X(x) \ . 
$$
\end{deft}

\begin{oss}\label{oss:essential:bound}
Notice that when a measurable cocycle $\sigma:\Gamma \times X \rightarrow \upG(m)$ admits a natural map $F:\hypkn \times X \rightarrow \hypkm$, the latter is an essentially bounded $\sigma$-equivariant map with differentiable slices. The essential boundedness comes from the estimate on that Jacobian of the slices of $F$. Indeed, the family of differential forms associated to $F$ can be written as
$$
\omega_x:=\sqrt{\det F^\ast_x g_m}=\jac F_x \cdot \omega_n \ , 
$$
where $\omega_n$ is the standard volume form on $\hypkn$. In particular, if we fix an orthonormal frame $\{u_1,\ldots,u_p\}$ of the tangent space $T_a \hypkn$ we have that
$$
|(\omega_x)_a(u_1,\ldots,u_p)| = \jac_a F_x \cdot |(\omega_n)_a(u_1,\ldots,u_p)| \leq 1 \ ,
$$
and this implies that $F$ is essentially bounded. Hence the set $\mathscr{D}(\sigma)$ of essentially bounded $\sigma$-equivariant map with differentiable slices is not empty, since it contains at least $F$. 
\end{oss}

The previous Remark allows us to define the notion of natural volume of a measurable cocycle.

\begin{deft}\label{def:volume:coc}
Let $\Gamma \leq \upG(n)$ and let $(X,\mu_X)$ be a standard Borel probability $\Gamma$-space. Let $\sigma:\Gamma \times X \rightarrow \upG(m)$ be a measurable cocycle, with $m \geq n$. Assume that $\sigma$ admits a natural map $F: \hypkn \times X \rightarrow \hypkm$. The \emph{natural volume of the cocycle $\sigma$} is defined as
$$
\nv(\sigma):=\vol(F) \ .
$$
\end{deft}

\begin{oss}
This definition might seem quite strange to the reader who is confident with the work of Francaviglia and Klaff \cite{franc06:articolo,franc09}. If we wanted to follow their theoretical line we should have considered the infimum of the volumes over all the possible elements of $\mathscr{D}(\sigma)$. However in that case, we would not be able to prove that the volume of the standard lattice embedding is maximal. 
\end{oss}

The first thing we want to show is that the notion of volume we gave is actually invariant along the $\upG(m)$-cohomology class of a fixed measurable cocycle, in analogy to what happens to the volume defined in \cite{moraschini:savini}. 

\begin{prop}\label{prop:volume:cohomology}
Let $\Gamma \leq \upG(n)$ be a torsion-free uniform lattice and let $(X,\mu_X)$ be a standard Borel probability $\Gamma$-space. Consider $\sigma:\Gamma \times X \rightarrow \upG(m)$ a measurable cocycle which admits a natural map. Given any measurable map $f:X \rightarrow \upG(m)$ it holds
$$
\nv(\sigma^f)=\nv(\sigma) \ ,
$$
and hence the natural volume in constant along the $\upG(m)$-cohomology class of $\sigma$. 
\end{prop}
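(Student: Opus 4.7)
The plan is to invoke Proposition \ref{prop:natural:cohomology} to obtain an explicit formula for the natural map of $\sigma^f$ and then verify that the pointwise post-composition by an isometry of $\hypkm$ does not affect the pullback volume form. More precisely, by Proposition \ref{prop:natural:cohomology} the natural map $F^f:\hypkn \times X \rightarrow \hypkm$ associated to $\sigma^f$ satisfies
$$
F^f(a,x)=f(x)^{-1}F(a,x) \ ,
$$
so that, for almost every $x \in X$, the slice $F^f_x$ decomposes as $F^f_x=f(x)^{-1} \circ F_x$.

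Next, I would use the fact that $\upG(m)$ is exactly the isometry group of $\hypkm$ endowed with the Riemannian metric $g_m$. Hence, for almost every $x \in X$, the element $f(x)^{-1}$ acts as an isometry, which gives $(f(x)^{-1})^\ast g_m=g_m$. By functoriality of the pullback we deduce
$$
(F^f_x)^\ast g_m=F^\ast_x \bigl((f(x)^{-1})^\ast g_m\bigr)=F^\ast_x g_m \ ,
$$
so that the family of differential forms defined in Section \ref{sec:vol:coc} satisfies $\omega^f_x=\omega_x$ for almost every $x \in X$.

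Finally I would integrate: since $\omega^f_x=\omega_x$ almost everywhere on $X$ and the construction of $\widetilde{\omega}_X$ is pointwise on $\hypkn$, we have $\widetilde{\omega}^f_X=\widetilde{\omega}_X$ as $\Gamma$-invariant differential forms on $\hypkn$, and hence $\omega^f_X=\omega_X$ on $\Gamma \backslash \hypkn$. Integrating over the quotient yields
$$
\nv(\sigma^f)=\vol(F^f)=\int_{\Gamma \backslash \hypkn} \omega^f_X=\int_{\Gamma \backslash \hypkn} \omega_X=\vol(F)=\nv(\sigma) \ ,
$$
which is the desired equality. There is no real obstacle here: the only nontrivial input is Proposition \ref{prop:natural:cohomology}, and the rest follows from the fact that $\upG(m)$ acts by isometries, which is why the natural volume descends to a $\upG(m)$-cohomology invariant.
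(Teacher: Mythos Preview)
Your proof is correct and follows essentially the same approach as the paper: invoke Proposition \ref{prop:natural:cohomology} to write $F^f_x=f(x)^{-1}\circ F_x$, then use that $f(x)^{-1}\in\upG(m)$ is an isometry of $g_m$ so the pulled-back metric (and hence the volume form $\omega_x$) is unchanged. The only cosmetic difference is that the paper first proves the slightly more general fact that $\vol(\Phi^f)=\vol(\Phi)$ for every $\Phi\in\mathscr{D}(\sigma)$ and then specializes to the natural map, whereas you work directly with $F$; the underlying computation is identical.
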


\begin{proof}
Recall that $\mathscr{D}(\sigma)$ (respectively $\mathscr{D}(\sigma^f)$) is the set of all the possible essentially bounded $\sigma$-equivariant (respectively $\sigma^f$-equivariant) maps with differentiable slices. It is easy to verify that given $\Phi \in \mathscr{D}(\sigma)$ we can define 
$$
\Phi^f:\hypkn \times X \rightarrow \hypkm, \hspace{5pt} \Phi^f(a,x):=f(x)^{-1}\Phi(a,x) \ ,
$$
and clearly $\Phi^f \in \mathscr{D}(\sigma^f)$. In this way we obtain a bijection
$$
{ }^f:\mathscr{D}(\sigma) \rightarrow \mathscr{D}(\sigma^f), \hspace{5pt} \Phi \mapsto \Phi^f \ . 
$$

We are going  to prove that this bijection preserves the volume, that is $\vol(\Phi)=\vol(\Phi^f)$ for every $\Phi \in \mathscr{D}(\sigma)$. Indeed we have that
\begin{align*}
\vol(\Phi^f)&=\int_{\Gamma \backslash \hypkn} \int_X \sqrt{\det ( \Phi^f_x )^\ast g_m} d\mu_X(x)=\\
&=\int_{\Gamma \backslash \hypkn} \int_X \sqrt{\det (f(x)^{-1}\Phi_x)^\ast g_m}d\mu_X(x)=\\
&=\int_{\Gamma \backslash \hypkn} \int_X \sqrt{\det (\Phi_x)^\ast g_m}d\mu_X(x)=\vol(\Phi) \ ,
\end{align*}
where to pass from the second to the third line we used the fact that $f(x) \in \upG(m)$ is an isometry for $g_m$. 

If we now restrict our attention to the case of natural maps, we already proved in Proposition \ref{prop:natural:cohomology} that if $F$ is the natural map associated to $\sigma$, then $F^f$ is the one associated to $\sigma^f$. From this consideration it follows that
$$
\nv(\sigma^f)=\vol(F^f)=\vol(F)=\nv(\sigma) \ , 
$$
as desired. 

\end{proof}

After the study of the properties of the volume invariant of a given cocycle, we are now ready to prove our rigidity theorem, that is Theorem \ref{teor:rigidity:cocycle}. 

\begin{proof}[Proof of Theorem \ref{teor:rigidity:cocycle}]
We are going to show first the Milnor-Wood type inequality. Define $M=\Gamma \backslash \hypkn$. Consider $\sigma:\Gamma \times X \rightarrow \upG(m)$ and the associated set $\mathscr{D}(\sigma)$. As a consequence of Remark \ref{oss:essential:bound} we know that the natural map $F:\hypkn \times X \rightarrow \hypkm$ associated to $\sigma$ is an element of $\mathscr{D}(\sigma)$.

By the estimate on the Jacobian of the slices of $F$, it follows that
\begin{align*}
\nv(\sigma)&=\vol(F)= \\
&= \int_{M} \int_X \sqrt{ \det F^\ast_x g_m }d\mu_X(x)=\\
&=\int_{M} \left(\int_X \jac_a F_x d\mu_X(x)\right) \omega_M \leq\\
&\leq \int_{M} \left(\int_X d\mu_X(x)\right) \omega_M=\vol(M) \ ,
\end{align*}
where $\omega_M$ is the volume form associated to the standard Riemannian metric on $M$. In this way we get our desired inequality. 

We now prove the rigidity statement. We first introduce some notation. Denote by 
$$
i_{n,m}:\upG(n) \rightarrow \upG(m), \hspace{5pt} g \mapsto 
\left(
\begin{array}{cc}
g & 0 \\ 
0 & \upI_{m-n}\\
\end{array}
\right) \ ,
$$
the upper-left corner embedding. Here $\upI_{m-n}$ stands for the identity matrix of order $(m-n)$. Let $\jmath_{n,m}:\hypkn \rightarrow \hypkm$ be the totally geodesic embedding of $\hypkn$ into $\hypkm$ which is $i_{n,m}$-equivariant. 

Assume now that $\nv(\sigma)=\vol(M)$. By definition it follows that 
$$
\vol(M) = \nv(\sigma) = \vol(F) \ ,
$$
where $F$ still denotes the natural map associated to $\sigma$. By writing explicitly the equality above we get
\begin{equation}\label{equation:max:jac}
\vol(M) = \int_{M} \int_X \sqrt{ \det F^\ast_x g_m }d\mu_X(x) = \int_{M} \left(\int_X \jac_a F_x d\mu_X(x)\right) \omega_M \ . 
\end{equation}
Since $\jac_a F_x \leq 1$ for every $a \in \hypkn$ and almost every $x \in X$, Equation (\ref{equation:max:jac}) implies that 
$$
\jac_aF_x=1 \ ,
$$
for almost every point of the $\Gamma$-fundamental domain in $\hypkn$ and for almost every $x \in X$. By the $\sigma$-equivariance of $F$, we have that 
$$
\jac_aF_x=1 \ ,
$$
for almost every $a \in \hypkn$ and almost every $x \in X$. Now fix $x \in X$ and consider the slice $F_x:\hypkn \rightarrow \hypkm$ for which it holds $\jac_aF_x=1$ for almost every $a \in \hypkn$. By \cite{bcg95,franc06:articolo} it follows that $F_x$ coincides essentially with a totally geodesic embedding of $\hypkn$ into $\hypkm$. More precisely there must exists $f(x) \in \upG(m)$ such that 
$$
F_x(a)=f(x)\jmath_{n,m}(a) \ ,
$$
for almost every $a \in \hypkn$ (actually every $a \in \hypkn$ by the differentiability of the slices). In this way we obtain a map $f:X \rightarrow \upG(m)$. Since by assumption $X$ is a standard Borel space, the function $\widehat{F}:X \rightarrow \textup{Meas}(\hypkn,\hypkm), \hspace{5pt} \widehat{F}(x):=F_x$ is measurable by \cite[Lemma 2.6]{fisher:morris:whyte} and this implies the measurability of $f$. In this way we get a measurable map $f:X \rightarrow \upG(m)$ which conjugates $F$ to the totally geodesic embedding $\jmath_{n,m}$. Following the same strategy exposed in \cite[Proposition 3.2]{sauer:articolo} we claim that $\sigma$ is cohomologous to the restriction $i_{n,m}|_{\Gamma}$ of the upper-left corner embedding to the lattice $\Gamma$, modulo possibly a compact subgroup. 

Let $$C:=\textup{Stab}_{\upG(m)}(\jmath_{n,m})$$ be the subgroup of $\upG(m)$ fixing pointwise the image of $\jmath_{n,m}$. This is the trivial group when $n=m$ and it is compact when $m>n$.  Fix $\gamma \in \Gamma$. For almost every $a \in \hypkn,x \in X$, on one hand it holds
$$
F(\gamma a,\gamma x)=f(\gamma x)\jmath_{n,m}(\gamma a)=f(\gamma x) i_{n,m}(\gamma) \jmath_{n,m}(a) \ ,
$$
and on the other hand we have
$$
F(\gamma a, \gamma x)=\sigma(\gamma,x)F(a,x)=\sigma(\gamma,x)f(x)\jmath_{n,m}(a) \ .
$$
Hence it follows
$$
i_{n,m}(\gamma)=f(\gamma x)^{-1}\sigma(\gamma,x)f(x) \ \ \mod C 
$$
and the claim is proved. 

Viceversa, consider the cocycle $\sigma_{i_{n,m}}:\Gamma \times X \rightarrow \upG(m)$ associated to the representation $i_{n,m}$ restricted to $\Gamma$. It is easy to verify that the natural map associated to this cocycle is given by 
$$
F:\hypkn \times X \rightarrow \hypkm, \hspace{5pt} F(a,x):=\jmath_{n,m}(a) \ ,
$$
for every $a \in \hypkn$ and almost every $x \in \hypkn$. We have that
\begin{align*}
\nv(\sigma_{i_{n,m}})&=\vol(F)=\\
&=\int_{M} \left( \int_X \jac_aF_x d\mu_X(x) \right)\omega_M=\\
&=\int_{M} \left( \int_X \jac_a \jmath_{n,m} d\mu_X(x) \right) \omega_M=\\
&=\int_{M} \jac_a \jmath_{n,m} \omega_M=\int_M \omega_M=\vol(M) \ ,
\end{align*}
and the statement follows.
\end{proof}

It is worth noticing that all the results we have shown so far are still valid for non-uniform lattice, since we did not exploit the property of being uniform for the lattice $\Gamma$. 

We conclude the section by underling that importance of the previous theorem, since in the particular case when $n=m$, it has been exploited by the author \cite{savini4} to prove the $1$-tautness of the group $\pu(n,1)$, when $n \geq 2$. 

\section{Volume of equivariant maps and mapping degree}\label{sec:vol:degree} 

In this section we are going to show a suitable adaptation of the mapping degree theorem to the context of measurable cocycles associated to uniform lattices of rank-one Lie groups (see also \cite[Proposition 3.1]{derbez:liu:sun:wang}). We will introduce the notion of pullback of a measurable cocycle $\sigma$ with respect to a continuous map between closed manifolds. Since the same can be done for a measurable $\sigma$-equivariant map $\Phi$ with essentially bounded differentiable slices, we are going to show that the volume of the pullback $f^\ast \Phi$ bounds from above the volume of $\Phi$ multiplied by the mapping degree of the continuous map. 

Let $\Gamma,\Lambda \leq \upG(n)$ be two torsion-free uniform lattices. Denote by $M=\Gamma \backslash \hypkn$ and $N=\Lambda \backslash \hypkn$ the closed hyperbolic manifolds associated to $\Gamma$ and $\Lambda$, respectively. Consider a continuous map $f:M \rightarrow N$ and denote by $\pi_1(f):\Gamma \rightarrow \Lambda$ the homomorphism induced on the fundamental groups. Let $(X,\mu_X)$ be a standard Borel probability $\Lambda$-space. Following \cite[Section 6]{moraschini:savini}, given a measurable cocycle $\sigma:\Lambda \times X \rightarrow \upG(m)$ we define the \emph{pullback cocycle of $\sigma$ with respect to $f$} as follows
$$
f^\ast \sigma:\Gamma \times X \rightarrow \upG(m), \hspace{5pt} f^\ast \sigma(\gamma,x):=\sigma(\pi_1(f)(\gamma),x) \ .
$$
Here $(X,\mu_X)$ is viewed as a $\Gamma$-space with the action induced by the map $\pi_1(f)$. As shown in \cite[Lemma 6.1]{moraschini:savini} the map $f^\ast \sigma$ is a well-defined cocycle. 

Given a continuous map $f:M \rightarrow N$ with non-zero degree, it is well-known by \cite{burger:mozes,franc09} that there exists an essentially unique measurable map $\varphi:\partial_\infty \hypkn \rightarrow \partial_\infty \hypkn$ which is $\pi_1(f)$-equivariant. Hence, by following the approach of \cite{bcg95,bcg96,bcg98} there exists a natural map $\widetilde{F}:\hypkn \rightarrow \hypkn$ which is smooth and $\pi_1(f)$-equivariant. This map descends to a smooth map $F:M \rightarrow N$ which has the same degree of $f$. Additionally we recall the standard bound on the Jacobian of $\widetilde{F}$, that is $\jac_a \widetilde{F} \leq 1$ for every $a \in \hypkn$. 

Now consider a measurable $\sigma$-equivariant map $\Phi:\hypkn \times X \rightarrow \hypkm$. We can define the \emph{pullback of the map $\Phi$ along the continuous map $f$} as follows
\[
f^\ast \Phi: \hypkn \times X \rightarrow \hypkm, \hspace{5pt} f^\ast \Phi(a,x):=\Phi( \widetilde{F}(a),x) \ ,
\]
where $\widetilde{F}$ is the lift of the natural map previously described.
Having introduced all the notation we needed, we are now ready to prove our version of mapping degree theorem.

\begin{proof}[Proof of Proposition \ref{prop:map:deg}]
Up to changing the orientation to either $M$ or $N$ we can suppose that the degree of the map $f:M \rightarrow N$ is positive. Denote by $\omega_M$ and $\omega_N$ the volume forms induced on $M$ and on $N$ by the standard hyperbolic metric, respectively. 

Let now $\Phi:\hypkn \times X \rightarrow \hypkm$ be the equivariant map we have in statement. It holds the following chain of equalities 
 \begin{align*}
\vol(f^\ast \Phi)&=\int_{M} \int_X \sqrt{ \det (f^\ast \Phi)_x^\ast g_m } d\mu_X(x)=\\
&=\int_{M} \left( \int_X \jac_a(f^\ast \Phi)_x d\mu_X(x)\right) \omega_M =\\
&=\int_{M} \left( \int_X \jac_a(\Phi_x \circ \widetilde{F})d\mu_X(x) \right) \omega_M \ . \\
\end{align*}

We can now take out from the sign of integration along $X$ the Jacobian of the map $\widetilde{F}$, actually of the map $F$ by its equivariance property: 
\begin{align*}
\int_{M} \left( \int_X \jac_a(\Phi_x \circ \widetilde{F})d\mu_X(x) \right) \omega_M&=\int_{M} \jac_a F \left( \int_X \jac_{\widetilde{F}(a)} \Phi_x d\mu_X(x) \right) \omega_M = \\
&=\int_{N} \sum_{a \in F^{-1}(b)}\left( \int_X \jac_b \Phi_x d\mu_X(x) \right) \omega_N =\\
&= \int_{N} \calN(b) \left( \int_X \jac_b \Phi_x d\mu_X(x) \right) \omega_N \geq \\
&\geq \int_{N} \deg(F) \left( \int_X \jac_b \Phi_x d\mu_X(x) \right) \omega_N = \deg(f) \cdot \vol(\Phi) \ . 
\end{align*}
In the computation above, for any $b \in N$ we defined the number $\calN(b)$ as the cardinality
$$
\calN(b):=\textup{card}( \widetilde{F}^{-1}(b)) \ ,
$$
and we used the co-area formula to move from the first line to the second one. Since $\calN(b) \geq \deg(F)$ and $F$ has the same degree of $f$, the desired inequality follows.

Suppose now that $f$ is homotopic to a local isometry. By the strict version of mapping degree theorem (see \cite[Theorem 6.4]{Thurston} for the real hyperbolic case and \cite[Th\'eor\`em Principal]{bcg95} for the rank-one case) it follows that 
$$
\vol(M)=\deg(f) \cdot \vol(N)= \deg(F) \cdot \vol(N) \ ,
$$
since $F$ and $f$ have the same degree. We claim that this implies that $F:M \rightarrow N$ is a local isometry. Indeed following the same reasoning exposed in \cite[Appendix C]{bcg95} we have that
$$
\vol(M) \geq \int_M \jac_aF \cdot \omega_M = \int_N \calN(b) \cdot \omega_N \geq \deg(F) \cdot \vol(N) \ .
$$
In our particular case all the inequalities above are actually equalities and hence $\calN(b)=\deg(F)$ and, by the equivariance of $\widetilde{F}$, we obtain $\jac_a \widetilde{F}=1$ for almost every $a \in \hypkn$. This means that $\widetilde{F}: \hypkn \rightarrow \hypkn$ coincides essentially with an isometry (hence it is an isometry being differentiable). Thus the induced map $F:M \rightarrow N$ is a local isometry.  

By the conditions on both the cardinality $\calN(b)$ and on the Jacobian of $F$, if we substitute their respective values into the chain of equalities at the beginning of the proof, we get that
$$
\vol(f^\ast \Phi)=\deg(f) \cdot \vol(\Phi) \ , 
$$
as claimed. 

\end{proof}

We would like to apply the proposition above to argue a relation between the degree of the continuous map $f$ and the natural volumes of the cocycles $\sigma$ and $f^\ast \sigma$, respectively. Unfortunately, if $\Psi$ is the natural map associated to $\sigma$, it is not true a priori that $f^\ast \Psi$ is the natural map associated to $f^\ast \sigma$. For this reason a weaker statement given by the following

\begin{cor}\label{cor:natural:deg}
Let $\Gamma,\Lambda \leq \upG(n)$ be two torsion-free uniform lattices. Set $M=\Gamma \backslash \hypkn$, $N=\Lambda \backslash \hypkn$ and let $f:M \rightarrow N$ be a continuous map with non-vanishing degree. Fix a standard Borel probability $\Lambda$-space $(X,\mu_X)$ and consider a measurable cocycle $\sigma:\Lambda \times X \rightarrow \upG(m)$. Suppose that $\sigma$ admits a natural map $\Psi$. Then it holds 
$$
|\deg(f)| \leq \frac{\vol(f^\ast\Psi)}{\nv(\sigma)} \ .
$$
Additionally if $f$ is homotopic to a local isometry then the equality is attained. 
\end{cor}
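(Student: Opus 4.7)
The plan is to apply Proposition \ref{prop:map:deg} to the specific test map $\Phi := F_\sigma$, where $F_\sigma$ denotes the natural map associated to $\sigma$ constructed in Theorem \ref{teor:natural:map}. By Remark \ref{oss:essential:bound}, $F_\sigma$ lies in $\mathscr{D}(\sigma)$, and Proposition \ref{prop:map:deg} immediately yields
$$|\deg(f)| \leq \frac{\vol(f^\ast F_\sigma)}{\vol(F_\sigma)} = \frac{\vol(f^\ast F_\sigma)}{\nv(\sigma)},$$
with equality when $f$ is homotopic to a local isometry. The corollary will therefore follow once one establishes that $\vol(f^\ast F_\sigma) = \nv(f^\ast \sigma)$, i.e.\ that the pullback of the natural map of $\sigma$ is itself the natural map of the pulled-back cocycle $f^\ast \sigma$.

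First I would exhibit the correct boundary map for $f^\ast \sigma$. If $\phi : \partial_\infty \hypkn \times X \to \partial_\infty \hypkm$ is the boundary map of $\sigma$ and $\widetilde{\varphi} : \partial_\infty \hypkn \to \partial_\infty \hypkn$ is the $\pi_1(f)$-equivariant boundary map underlying the BCG map $\widetilde{F}$, then a routine equivariance check shows that
$$\psi : \partial_\infty \hypkn \times X \to \partial_\infty \hypkm, \qquad \psi(\xi, x) := \phi(\widetilde{\varphi}(\xi), x)$$
is an $f^\ast \sigma$-equivariant boundary map whose slices $\psi_x = \phi_x \circ \widetilde{\varphi}$ are essentially injective, as compositions of essentially injective maps (the essential injectivity of $\widetilde{\varphi}$ being the one used already in \cite{savini:articolo, savini2:articolo}). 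Theorem \ref{teor:natural:map} thus produces a natural map $F_{f^\ast \sigma}(a, x) = \barb((\phi_x)_\ast \widetilde{\varphi}_\ast \nu_a)$, whereas by definition $f^\ast F_\sigma(a, x) = F_\sigma(\widetilde{F}(a), x) = \barb((\phi_x)_\ast \nu_{\widetilde{F}(a)})$.

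The main obstacle is matching these two barycenters. My plan is to verify that the candidate $a \mapsto F_\sigma(\widetilde{F}(a), x)$ satisfies the implicit equation (\ref{equation:slice:implicit}) characterizing the slice of $F_{f^\ast \sigma}$. Performing the change of variable $\eta = \widetilde{\varphi}(\xi)$ converts the defining integral for $F_{f^\ast \sigma}$ into an integral against $\widetilde{\varphi}_\ast \nu_a$; combining the characterizing identity (\ref{equation:natural:map}) for the BCG map $\widetilde{F}$ at the point $a$ with (\ref{equation:slice:implicit}) applied to $F_\sigma$ at the point $\widetilde{F}(a)$ should force the resulting integral to vanish. Uniqueness of the barycenter, that is the strict convexity of $\Lambda_\nu$ discussed in Subsection \ref{subsec:bary:meas}, then gives $f^\ast F_\sigma = F_{f^\ast \sigma}$, hence the desired equality $\vol(f^\ast F_\sigma) = \nv(f^\ast \sigma)$. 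With this identification in place, the Milnor-Wood inequality of the corollary is immediate, and the equality case when $f$ is homotopic to a local isometry is inherited directly from the equality case of Proposition \ref{prop:map:deg}.
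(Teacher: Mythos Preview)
Your overall strategy matches the paper's exactly: apply Proposition~\ref{prop:map:deg} with $\Phi$ equal to the natural map of $\sigma$, and reduce everything to showing that $f^\ast F_\sigma$ is the natural map of $f^\ast\sigma$. The candidate boundary map $\psi_x = \phi_x \circ \widetilde{\varphi}$ is also the one the paper uses.

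The gap is in your justification of $F_{f^\ast\sigma}=f^\ast F_\sigma$. After the change of variable $\eta = \widetilde\varphi(\xi)$ you need
\[
\int_{\partial_\infty\hypkn} d\beta_o\big|_{(F_\sigma(\widetilde F(a),x),\,\phi_x(\eta))}(\,\cdot\,)\, d(\widetilde\varphi_\ast\nu_a)(\eta)=0,
\]
whereas (\ref{equation:slice:implicit}) for $F_\sigma$ at the point $\widetilde F(a)$ gives the \emph{same integrand} but integrated against $\nu_{\widetilde F(a)}$. Equation~(\ref{equation:natural:map}) involves a \emph{different} Busemann differential (on $\hypkn$, based at $\widetilde F(a)$, not on $\hypkm$ at $F_\sigma(\widetilde F(a),x)$); all it says is that the barycenter of $\widetilde\varphi_\ast\nu_a$ is $\widetilde F(a)$, not that this measure equals $\nu_{\widetilde F(a)}$. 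Distinct measures can share a barycenter, and after pushing forward by $\phi_x$ into $\partial_\infty\hypkm$ their barycenters will in general differ, so there is no formal way to ``combine'' the two implicit equations as you propose.

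The paper closes this step by invoking directly the measure-level identity $\widetilde\varphi_\ast\nu_a=\nu_{\widetilde F(a)}$, stated as the commutativity of the square $\nu\circ\widetilde F=\widetilde\varphi_\ast\circ\nu:\hypkn\to\calM^1(\partial_\infty\hypkn)$, a feature of the BCG construction in the equal-dimensional setting. With that identity in hand the computation
\[
\barb\bigl((\psi_x)_\ast\nu_a\bigr)=\barb\bigl((\phi_x)_\ast\widetilde\varphi_\ast\nu_a\bigr)=\barb\bigl((\phi_x)_\ast\nu_{\widetilde F(a)}\bigr)=F_\sigma(\widetilde F(a),x)=f^\ast F_\sigma(a,x)
\]
is immediate, and the rest of your argument goes through verbatim.
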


\begin{proof}
The claim follows immediately by substituting $\Psi$ in Proposition \ref{prop:map:deg} and noticing that we have $\nv(\sigma)=\vol(\Psi)$ by definition. 
\end{proof}

\begin{oss}
In the situation of Corollary \ref{cor:natural:deg}, it is worth noticing that $\vol(f^\ast \Psi)$ is bounded from above by $\vol(M)$. Indeed we have that
\begin{align}\label{eq:pullback:bound}
\vol(f^\ast \Psi) &= \int_M \jac_a F \left( \int_X \jac_{\widetilde{F}(a)}\Psi_x d\mu_X(x) \right) \omega_M \leq \\
&\leq  \int_M \left( \int_X d\mu_X(x) \right) \omega_M =\vol(M) \nonumber \  , 
\end{align}
where we moved from the first line to the second one using the bound on the Jacobian of both natural maps $\widetilde F$ and $\Psi$. 
\end{oss}

We conclude this section by giving a characterization of continuous maps homotopic to local isometries in terms of naturally maximal cocycles. A cocycle $\sigma:\Lambda \times X \rightarrow \upG(m)$ is \emph{naturally maximal} if it admits a natural map $\Psi:\hypkn \times X \rightarrow \hypkm$ and it holds that
$$
\nv(\sigma)=\vol(\Lambda \backslash \hypkn) \ . 
$$
Compare the following result with \cite[Proposition 1.4]{moraschini:savini}.

\begin{cor}
Let $\Gamma,\Lambda \leq \upG(n)$ be two torsion-free uniform lattices. Set $M=\Gamma \backslash \hypkn$, $N=\Lambda \backslash \hypkn$ and let $f:M \rightarrow N$ be a continuous map with non-vanishing degree. Fix a standard Borel probability $\Lambda$-space $(X,\mu_X)$ and consider a naturally maximal cocycle $\sigma:\Lambda \times X \rightarrow \upG(m)$ with natural map $\Psi$. Then $f$ is homotopic to a local isometry if and only if $\vol(f^\ast \Psi)=\vol(M)$. 
\end{cor}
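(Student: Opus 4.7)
The strategy is to treat the two implications asymmetrically: the forward direction is essentially a direct bookkeeping application of Corollary \ref{cor:natural:deg} together with Thurston's strict mapping degree theorem, while the reverse direction requires running backwards through the chain of inequalities appearing in the proof of Proposition \ref{prop:map:deg} in order to force each inequality to be an equality.

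First I would set up the standing notation: let $F : M \to N$ be the Besson--Courtois--Gallot natural map associated to $f$, with lift $\widetilde{F} : \hypkn \to \hypkn$ which is $\pi_1(f)$-equivariant and satisfies $\jac_a \widetilde{F} \leq 1$. Note that $F$ has the same degree as $f$ and, since $M$ and $N$ are aspherical, is homotopic to $f$. Let $\Psi$ be the natural map for $\sigma$ (so that, by the proof of Corollary \ref{cor:natural:deg}, the natural map for $f^\ast \sigma$ is $f^\ast \Psi$).

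For the forward direction, assume $f$ is homotopic to a local isometry. Then by Thurston's strict mapping degree theorem we have $|\deg(f)| = \vol(M)/\vol(N)$, and by the equality case already recorded in Corollary \ref{cor:natural:deg} one has $\nv(f^\ast \sigma) = |\deg(f)| \cdot \nv(\sigma)$. Since $\sigma$ is naturally maximal, $\nv(\sigma) = \vol(N)$, so combining these yields $\nv(f^\ast \sigma) = \vol(M)$, i.e.\ $f^\ast \sigma$ is naturally maximal.

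For the reverse direction, assume $\nv(f^\ast \sigma) = \vol(M)$. Substituting $\Phi = \Psi$ into the computation that appears in the proof of Proposition \ref{prop:map:deg}, we obtain the chain
\[
\vol(M) \;=\; \nv(f^\ast \sigma) \;=\; \int_M \jac_a F \!\left(\int_X \jac_{\widetilde{F}(a)} \Psi_x \, d\mu_X(x)\right) \omega_M \;\leq\; \int_M \jac_a F \cdot \omega_M \;\leq\; \vol(M),
\]
where the first inner bound uses $\jac_b \Psi_x \leq 1$ (Theorem \ref{teor:natural:map}) and the second uses $\jac_a F \leq 1$. Consequently both inequalities are equalities, so $\jac_a F = 1$ almost everywhere on $M$. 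By the equality case of Theorem \ref{teor:natural:map} (equivalently, by the Besson--Courtois--Gallot rigidity result), $\widetilde{F} : \hypkn \to \hypkn$ is then a Riemannian isometry, so $F : M \to N$ is a local isometry. Because $F$ is homotopic to $f$, this gives the conclusion.

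The main subtlety is the reverse direction: one must be careful that Corollary \ref{cor:natural:deg} alone only yields $|\deg(f)| \leq \vol(M)/\vol(N)$ from the hypothesis $\nv(f^\ast \sigma) = \vol(M) = (\vol(M)/\vol(N)) \cdot \nv(\sigma)$, which is not enough to trigger the equality case of Thurston's theorem. The remedy—and the real content of the argument—is to go one level deeper and exploit the pointwise saturation of the co-area identity used to prove Proposition \ref{prop:map:deg}, from which local isometry of $F$ (hence of $f$ up to homotopy) falls out directly, bypassing Thurston entirely on this side.
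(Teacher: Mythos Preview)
Your proof is correct and follows the same overall strategy as the paper. The only differences are minor: in the reverse direction the paper first invokes the natural maximality of $\sigma$ to get $\jac_b\Psi_x=1$ exactly (so the inner integral collapses to $1$), whereas you use only the bound $\jac_b\Psi_x\leq 1$ and a sandwich argument; and the paper closes by deducing $\vol(M)=\deg(f)\cdot\vol(N)$ from the local isometry $F$ and then appealing to Thurston's strict degree theorem, while you finish more directly by noting that $F\simeq f$ since $M$ and $N$ are aspherical. Your closing remark about ``pointwise saturation of the co-area identity'' is slightly off---your sandwich never actually touches the co-area step---but this is only an expository slip, not a logical one.
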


\begin{proof}
We are going to keep the same notation of both Proposition \ref{prop:map:deg} and Corollary \ref{cor:natural:deg}. Suppose that $f$ is homotopic to a local isometry. By Corollary \ref{cor:natural:deg} it follows 
$$
\vol(f^\ast \Psi)=\deg(f) \cdot \nv(\sigma)=\deg(f) \cdot \vol(N) \ ,
$$
and the latter equality is justified by the maximality assumption. As a consequence of the strict version of mapping degree theorem \cite[Theorem 6.4]{Thurston},\cite[Th\'eor\`em Principal]{bcg95} it holds
$$
\vol(M)=\deg(f) \cdot \vol(N) \ ,
$$
and hence $\vol(f^\ast\Psi)=\vol(M)$, as claimed. 

Assume now that $\vol(f^\ast \Psi)=\vol(M)$. By Inequality \eqref{eq:pullback:bound} we argue that
$$
\jac_a \widetilde F =1 \ ,
$$
for almost every $a \in \hypkn$. Thus $\widetilde F$ is an isometry and $F:M \rightarrow N$ is a local isometry. In particular it holds
$$
\vol(M)=\deg(F) \cdot \vol(N) = \deg(f) \cdot \vol(N) \ , 
$$
and the statement follows again by the strict version of mapping degree theorem. 
\end{proof}


\bibliographystyle{amsalpha}
\bibliography{biblionote}

\end{document}